\DeclareFontFamily{OT2}{cmr}{\hyphenchar\font45 }
\DeclareFontShape{OT2}{cmr}{m}{l}{%
  <5><6><7><8><9>gen*wncyr%
  <10><10.95><12><14.4><17.28><20.74><24.88>wncyr10}{}
\DeclareMathAlphabet{\mathcyr}{OT2}{cmr}{m}{l}
\DeclareMathAlphabet{\mathcyb}{OT2}{cmr}{b}{l}
\SetMathAlphabet{\mathcyr}{bold}{OT2}{cmr}{b}{l}
\newtheorem{theoremcounter}{Theorem Counter}[section]
\theoremstyle{definition}
\newtheorem{remark}[theoremcounter]{Remark}
\theoremstyle{plain}
\newtheorem{lemma}[theoremcounter]{Lemma}
\newtheorem{corollary}[theoremcounter]{Corollary}
\newtheorem{theorem}[theoremcounter]{Theorem}
\theoremstyle{definition}
\theoremstyle{plain}
\numberwithin{equation}{section}
\newcommand{\Q}{\mathbb{Q}}
\newcommand{\C}{\mathbb{C}}
\begin{document}

\title{On the mean values of the Barnes multiple zeta function}

\author{Takashi Miyagawa}
\address[Takashi Miyagawa]{Onomichi City University,  1600-2 Hisayamada-cho, Onomichi, Hiroshima, 722-8506, Japan} 
\email{miyagawa@onomichi-u.ac.jp}

\author{Hideki Murahara}
\address[Hideki Murahara]{The University of Kitakyushu,  4-2-1 Kitagata, Kokuraminami-ku, Kitakyushu, Fukuoka, 802-8577, Japan}
\email{hmurahara@mathformula.page}

\subjclass[2020]{Primary 11M32}

\begin{abstract}
The asymptotic behavior of the mean values of multiple zeta functions is of significant interest due to its close connection with the Riemann zeta function.  
In this paper, we establish asymptotic behavior of the mean square values of Barnes multiple zeta functions.
\end{abstract}

\keywords{Riemann zeta function, Barnes multiple zeta function, mean value theorem}

\maketitle

\section{Introduction}
Let $r $ be a positive integer, $s=\sigma+it$ a complex variable, $a$ a real positive parameter, and $w_j\;(j=1,\dots,r)$ complex parameters located in the same half-plane, separated by a straight line through the origin. 
The Barnes multiple zeta function, denoted by $\zeta_r(s,a,(w_1, \dots,w_r))$ and introduced in \cites{Barnes1899, Barnes1901, Barnes1904}, is defined as  
\begin{align}\label{zeta_r}
 \zeta_r (s,a,(w_1,\dots,w_r))
 =\sum_{m_1=0}^\infty \cdots \sum_{m_r=0}^\infty
  \frac{1}{(a+m_1 w_1+\cdots+m_r w_r)^s}.
\end{align}
The series on the right-hand side converges absolutely for $\mathrm{Re}(s) > r$ and can be meromorphically continued to the entire complex $s$-plane. 
The function has simple poles located at $s=j$ for $j=1,\dots,r$.

The Barnes multiple zeta function was originally introduced by Barnes in the course of developing the theory of multiple gamma functions, which can be regarded as higher-dimensional analogues of the classical gamma function.  
This construction is based on Lerch's formula for the Hurwitz zeta function, which expresses the logarithm of the classical gamma function as
\[
\log \Gamma(a) = \zeta'(0,a) + \frac{1}{2}\log 2\pi.
\]
In particular, the multiple gamma function can be defined in terms of the derivative of the Barnes multiple zeta function at $s=0$ as
\[
\log \Gamma_r(a,\mathbf{w})
= \left. \frac{\partial}{\partial s} \zeta_r(s,a,\mathbf{w}) \right|_{s=0}
+ \rho_r(\mathbf{w}),
\]
where $\rho_r(\mathbf{w})$ is a normalization constant depending only on $\mathbf{w}$.  
Since then, Barnes-type zeta functions have been studied extensively in connection with special functions and analytic number theory.

Moreover, Barnes multiple zeta functions are related to several areas such as spectral zeta functions and combinatorial geometry, including the study of lattice point enumeration in polyhedral cones.  
In particular, the Barnes double zeta function has been used in problems related to counting lattice points in planar regions and in the study of sums of the form
\[
\sum_{n \le x} \left(\theta n - \lfloor \theta n \rfloor - \frac{1}{2}\right),
\]
where $\theta$ is a fixed irrational number, which arise in analytic number theory (see, e.g., \cite{HardyLittlewood1922first}).  
These connections indicate that the study of their analytic properties is of independent interest.

Research on the mean values of multiple zeta functions began with the study of the squared mean value of the Euler-Zagier double zeta function
\[
 \zeta_{EZ,2}(s_1,s_2)
 =\sum_{m_1=1}^\infty \sum_{m_2=1}^\infty \frac{1}{{m_1}^{s_1}(m_1+m_2)^{s_2}} 
\]
by Matsumoto and Tsumura \cite{MatsumotoTsumura2015}.
Related studies include those by Ikeda, Matsuoka, and Nagata \cite{IkedaMatsuokaNagata2015},
Ikeda, Kiuchi, and Matsuoka \cites{IkedaKiuchiMatsuoka2016, IkedaKiuchiMatsuoka2017},
Kiuchi and Minamide \cite{KiuchiMinamide2016}, and Banerjee, Minamide, and Tanigawa \cite{BanerjeeMinamideTanigawa2021}.
Moreover, Okamoto and Onozuka \cite{OkamotoOnozuka2015} provided results on the squared mean value of the Mordell-Tornheim double zeta function
\[
 \zeta_{MT, 2}(s_1, s_2, s_3) 
 =\sum_{m_1=1}^\infty\sum_{m_2=1}^\infty\frac{1}{m_1^{s_1}m_2^{s_2}(m_1+m_2)^{s_3}},
\]
while Toma \cite{Toma2023} obtained similar results for the Apostol-Vu double zeta function
\[
 \zeta_{AV, 2}(s_1, s_2, s_3) 
 =\sum_{1\le m_2<m_1}
 \frac{1}{m_1^{s_1}m_2^{s_2}(m_1+m_2)^{s_3}}.
\]

Boldface symbols are used to represent index tuples:  
\[
 \mathbf{m}=(m_1,\dots,m_r), \quad
 \mathbf{n}=(n_{1},\dots,n_{r}), \quad
 \mathbf{w}=(w_1,\dots,w_r).
\]
We assume that $w_1,\dots,w_r\in \mathbb{R}_{>0}$ throughout this paper.
Using this notation, \eqref{zeta_r} can be rewritten as  
\begin{align}
 \zeta_r(s,a,\mathbf{w})
 =\sum_{m_1,\dots,m_r\ge 0}
 \frac{1}{(a+\mathbf{m}\cdot\mathbf{w})^s}.
\end{align}
We introduce the following function, which will appear in our main results: 
\[
 \tilde{\zeta_{r}}(\sigma,a,\mathbf{w})
 =\sum_{\substack{m_1,\dots,m_r\ge0\\n_{1},\dots,n_{r}\ge0\\
\mathbf{m}\cdot\mathbf{w}=\mathbf{n}\cdot\mathbf{w}}}
  \frac{1}{(a+\mathbf{m}\cdot\mathbf{w})^{\sigma}(a+\mathbf{n}\cdot\mathbf{w})^{\sigma}}.
\]
The condition $\mathbf{m}\cdot\mathbf{w}=\mathbf{n}\cdot \mathbf{w}$ implies $\mathbf{m}=\mathbf{n}$ if the parameters $w_1,\dots,w_r$ are linearly independent over $\mathbb{Q}$. 
Consequently, we have 
\begin{align*}
 \tilde{\zeta_{r}}(\sigma,a,\mathbf{w})
 &=\sum_{\substack{m_1,\dots,m_r\ge0\\n_{1},\dots,n_{r}\ge0\\\mathbf{m}=\mathbf{n}}}
  \frac{1}{(a+\mathbf{m}\cdot\mathbf{w})^{\sigma}(a+\mathbf{n}\cdot\mathbf{w})^{\sigma}}
 =\sum_{m_1,\dots,m_r\ge0}
  \frac{1}{(a+\mathbf{m}\cdot\mathbf{w})^{2\sigma}}\\
 &=\zeta_{r}(2\sigma,a,\mathbf{w}).
\end{align*}

We denote by
$
d=\dim_{\mathbb{Q}}\langle w_1,\dots,w_r\rangle
$
the dimension of the $\mathbb{Q}$-vector space generated by
$w_1,\dots,w_r$, which we call the $\mathbb{Q}$-rank of
$\mathbf{w}=(w_1,\dots,w_r)$.

\begin{theorem}\label{main1}
Assume that $d=1$.
 For $s=\sigma+it\in\mathbb{C}$ with $\sigma>r$, we have
 \[
  \int_{1}^{T}|\zeta_{r}(\sigma+it,a,\mathbf{w})|^{2}dt
  =\tilde{\zeta_{r}}(\sigma,a,\mathbf{w})T+O(1)
 \]
 as $T\rightarrow+\infty$.
\end{theorem}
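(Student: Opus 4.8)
The plan is to argue entirely inside the region of absolute convergence $\sigma>r$ and to reduce the mean square to a double Dirichlet series. Writing $c_{\mathbf{m}}:=a+\mathbf{m}\cdot\mathbf{w}>0$ and noting that conjugation merely sends $it$ to $-it$, one gets
\[
 |\zeta_r(\sigma+it,a,\mathbf{w})|^2
 =\sum_{\mathbf{m},\mathbf{n}\ge0}
 \frac{1}{c_{\mathbf{m}}^{\sigma}c_{\mathbf{n}}^{\sigma}}
 \left(\frac{c_{\mathbf{n}}}{c_{\mathbf{m}}}\right)^{it}.
\]
Absolute convergence, uniform in $t$ on $[1,T]$, lets me integrate term by term, so the task becomes the evaluation of $\int_1^T (c_{\mathbf{n}}/c_{\mathbf{m}})^{it}\,dt$ for every pair $(\mathbf{m},\mathbf{n})$ followed by the resummation.

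Next I would separate the pairs into the diagonal $\mathbf{m}\cdot\mathbf{w}=\mathbf{n}\cdot\mathbf{w}$ and the rest. On the diagonal the integrand is identically $1$, each such pair contributes $c_{\mathbf{m}}^{-2\sigma}(T-1)$, and summing over all diagonal pairs reproduces exactly the defining series of $\tilde{\zeta_r}$; hence the diagonal yields $(T-1)\tilde{\zeta_r}(\sigma,a,\mathbf{w})=\tilde{\zeta_r}(\sigma,a,\mathbf{w})\,T+O(1)$, which is the claimed main term. For an off-diagonal pair $\log(c_{\mathbf{n}}/c_{\mathbf{m}})\neq0$ and
\[
 \int_1^T\left(\frac{c_{\mathbf{n}}}{c_{\mathbf{m}}}\right)^{it}dt
 =\frac{(c_{\mathbf{n}}/c_{\mathbf{m}})^{iT}-(c_{\mathbf{n}}/c_{\mathbf{m}})^{i}}{i\log(c_{\mathbf{n}}/c_{\mathbf{m}})},
\]
whose modulus is at most $2/|\log(c_{\mathbf{n}}/c_{\mathbf{m}})|$. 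It therefore suffices to prove that the $T$-independent sum $\sum c_{\mathbf{m}}^{-\sigma}c_{\mathbf{n}}^{-\sigma}|\log(c_{\mathbf{n}}/c_{\mathbf{m}})|^{-1}$, taken over all off-diagonal pairs, is finite; this is what produces the $O(1)$ error.

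To estimate that sum I would split by the size of the ratio $c_{\mathbf{n}}/c_{\mathbf{m}}$. When the ratio lies outside $[1/2,2]$ one has $|\log(c_{\mathbf{n}}/c_{\mathbf{m}})|\ge\log 2$, and this part is dominated by $(2/\log2)\,\zeta_r(\sigma,a,\mathbf{w})^2<\infty$. The delicate part is the near-diagonal range $c_{\mathbf{n}}/c_{\mathbf{m}}\in[1/2,2]$ with $c_{\mathbf{m}}\neq c_{\mathbf{n}}$, where $c_{\mathbf{m}}\asymp c_{\mathbf{n}}$ and $|\log(c_{\mathbf{n}}/c_{\mathbf{m}})|\asymp|c_{\mathbf{n}}-c_{\mathbf{m}}|/c_{\mathbf{m}}$, so that each summand is comparable to $c_{\mathbf{m}}^{-(2\sigma-1)}\,|(\mathbf{m}-\mathbf{n})\cdot\mathbf{w}|^{-1}$. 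I would bound this using the lattice-point counting function $N(X)=\#\{\mathbf{m}\ge0:c_{\mathbf{m}}\le X\}$, which is $O(X^r)$ because it counts integer points in an $r$-dimensional simplex of side $O(X)$; organizing the pairs dyadically in $X\asymp c_{\mathbf{m}}$ and in the gap $|c_{\mathbf{n}}-c_{\mathbf{m}}|$ turns the bound into a geometric-type series that converges in the range $\sigma>r$.

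The main obstacle is precisely this near-diagonal block: the factor $|(\mathbf{m}-\mathbf{n})\cdot\mathbf{w}|^{-1}$ becomes large when two distinct values $\mathbf{m}\cdot\mathbf{w}$ and $\mathbf{n}\cdot\mathbf{w}$ are very close, so the whole estimate hinges on controlling how many pairs produce such small gaps — equivalently, on the regularity of $N(X)$ in short intervals. Quantifying this via the $O(X^r)$ growth of $N$ is what forces the threshold $\sigma>r$ and, once carried out, closes the argument; by comparison the diagonal extraction and the far-range bound are routine.
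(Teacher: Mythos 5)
Your reduction is exactly the paper's: expand $|\zeta_r|^2$ into a double series over $(\mathbf{m},\mathbf{n})$, integrate termwise, read off the diagonal $\mathbf{m}\cdot\mathbf{w}=\mathbf{n}\cdot\mathbf{w}$ as $\tilde{\zeta_{r}}(\sigma,a,\mathbf{w})(T-1)$, bound each off-diagonal integral by $2/|\log(c_{\mathbf{n}}/c_{\mathbf{m}})|$ (writing $c_{\mathbf{m}}=a+\mathbf{m}\cdot\mathbf{w}$), and split according to whether the ratio $c_{\mathbf{n}}/c_{\mathbf{m}}$ lies outside or inside $[1/2,2]$; the far range is $O(1)$ just as you say. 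The genuine gap is that the near-diagonal block --- which you yourself flag as the whole difficulty --- is never actually estimated, and the tool you propose cannot do it. The counting bound $N(X)=O(X^r)$ is a global count: it gives no lower bound on a nonzero gap $|(\mathbf{m}-\mathbf{n})\cdot\mathbf{w}|$ and no control on how many pairs fall into a given dyadic gap range. When the ratios $w_i/w_j$ are irrational, such gaps can be smaller than any fixed power of $c_{\mathbf{m}}$: for $r=2$, $w_1=1$, $w_2=\lambda$ a Liouville number, for every $A>0$ there are infinitely many $(p,q)$ with $0<|q\lambda-p|<q^{-A}$, and the pair $\mathbf{m}=(p,0)$, $\mathbf{n}=(0,q)$ then contributes a \emph{single} near-diagonal summand $\asymp c_{\mathbf{m}}^{1-2\sigma}|q\lambda-p|^{-1}\gg q^{A+1-2\sigma}$, which is unbounded as $q$ grows. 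So no bookkeeping that uses only the $O(X^r)$ growth of $N$ can make your sum finite; the ``geometric-type series'' you invoke diverges, and the threshold $\sigma>r$ is not what rescues it.

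What the paper does instead, and what is missing from your outline, is a parametrization exploiting the arithmetic-progression structure in one coordinate. Fix $m_1,\dots,m_r$ and $n_1,\dots,n_{r-1}$; the constraint $c_{\mathbf{m}}<c_{\mathbf{n}}<2c_{\mathbf{m}}$ forces $n_r$ into an interval of length $\asymp c_{\mathbf{m}}/w_r$, so that $c_{\mathbf{n}}=c_{\mathbf{m}}+w_r(k+\varepsilon)$ with $k=0,1,\dots,K$, $K\asymp c_{\mathbf{m}}\asymp 1+m_1+\cdots+m_r$, and a single offset $0\le\varepsilon<1$ determined by the frozen indices. Hence all gaps with $k\ge1$ are at least $w_rk$, the inner sum is harmonic, $\sum_{k}1/(k+\varepsilon)\ll\log K$, and since there are $O(K^{r-1})$ admissible tuples $(n_1,\dots,n_{r-1})$ one gets, in the paper's notation, $V_1\ll\sum_{\mathbf{m}}K^{r-1}(\log K)\,c_{\mathbf{m}}^{1-2\sigma}\ll\sum_{N\ge1}N^{2r-2\sigma-1}\log N<\infty$, convergent precisely for $\sigma>r$. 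This one-coordinate progression, not the counting function, is the engine of the proof. I add that your small-gap worry does not fully disappear even there: the $k=0$ term has gap $w_r\varepsilon$ with $\varepsilon$ possibly arbitrarily small, a point the paper's estimate $\sum_{k=0}^{K}1/(k+\varepsilon)\ll\log K$ passes over silently; so your instinct about where the crux lies is sound, but closing it requires Diophantine input on $\mathbf{w}$ (or an extra hypothesis), not the growth rate of $N(X)$.
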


When approximating the Riemann zeta function by a finite sum, the following formula holds:
Let $\sigma_0>0$, $x \ge 1$, and $C > 1$.  
If $s=\sigma+it \in \mathbb{C}$ satisfies $\sigma_0 \le \sigma \le 2$ and $|t| \le 2\pi x/ C$, then  
\[
 \zeta(s)=\sum_{1\le m\le x} \frac{1}{m^s}-\frac{x^{1-s}}{1-s}+O(x^{-\sigma})
\]
as $x\rightarrow \infty$.
Similarly, an approximation formula for $\zeta_r(s,a,\mathbf{w})$ is given by the following expression.
\begin{theorem}\label{main2}
Let $r-1<\sigma_1<\sigma_2$, $x\ge 1$, and $C>1$. 
If $s=\sigma+it \in \C$ with $\sigma_1<\sigma<\sigma_2$ and $|t|\le 2\pi x/C$, then
\begin{align*} 
 \zeta_r(s,a,\mathbf{w})
 &=\sum_{0\le m_1\le x}
  \cdots 
  \sum_{0\le m_r\le x}
  \frac{1}{(a+\mathbf{m}\cdot\mathbf{w})^{s}}\\
 &\quad -
 \sum_{\substack{E\subseteq\{w_1,\dots,w_r\}\\E\ne\emptyset}}
 (-1)^{\#E}
 \frac{(a+x\sum_{ e\in E }e)^{r-s}}{(s-1)\cdots (s-r) w_1\cdots w_{r}}
 +O(x^{r-1-\sigma})
\end{align*}
as $x\rightarrow \infty$.
\end{theorem}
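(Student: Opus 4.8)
The plan is to prove the identity first for $\sigma>r$, where $\zeta_r(s,a,\mathbf w)$ is the convergent series $\sum_{\mathbf m\ge0}F(\mathbf m)$ with $F(\mathbf t)=(a+\mathbf t\cdot\mathbf w)^{-s}$, and then to extend it to the strip $\sigma_1<\sigma<\sigma_2$ by analytic continuation. The conceptual point is that the stated correction is exactly the difference of two elementary integrals. Integrating $F$ coordinatewise over the full octant gives, for $\sigma>r$,
\[
 \int_{[0,\infty)^r}F(\mathbf t)\,d\mathbf t=\frac{a^{r-s}}{(s-1)\cdots(s-r)\,w_1\cdots w_r},
\]
while over the box $[0,x]^r$ the antiderivatives are evaluated at the endpoints $0$ and $x$ in each variable, producing
\[
 \int_{[0,x]^r}F(\mathbf t)\,d\mathbf t=\frac{1}{(s-1)\cdots(s-r)\,w_1\cdots w_r}\sum_{E\subseteq\{w_1,\dots,w_r\}}(-1)^{\#E}\Bigl(a+x\textstyle\sum_{e\in E}e\Bigr)^{r-s}.
\]
The $E=\emptyset$ summand of the second integral is the first integral, so their difference is precisely $-\sum_{E\ne\emptyset}(-1)^{\#E}(a+x\sum_{e\in E}e)^{r-s}/\bigl((s-1)\cdots(s-r)w_1\cdots w_r\bigr)$, the correction in the statement. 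It therefore suffices to show
\[
 \zeta_r(s,a,\mathbf w)-\!\!\sum_{0\le m_1,\dots,m_r\le x}\!\!F(\mathbf m)=\int_{[0,\infty)^r}F-\int_{[0,x]^r}F+O\bigl(x^{r-1-\sigma}\bigr).
\]

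To compare the sums with the integrals I would split each coordinate sum at $M=\lfloor x\rfloor$. Writing $\sum_{m_j\ge0}=\sum_{0\le m_j\le M}+\sum_{m_j>M}$ and expanding the product yields the exact decomposition
\[
 \zeta_r(s,a,\mathbf w)-S_{\mathrm{box}}=\sum_{\emptyset\ne E\subseteq\{1,\dots,r\}}\Bigl(\prod_{j\in E}\sum_{m_j>M}\Bigr)\Bigl(\prod_{j\notin E}\sum_{0\le m_j\le M}\Bigr)F(\mathbf m),
\]
where $S_{\mathrm{box}}$ is the finite box sum in the statement. The decisive observation is that in every term with $E\ne\emptyset$ the argument $a+\mathbf m\cdot\mathbf w$ never drops below $a+M\sum_{e\in E}e\gg x$: the tail coordinates $j\in E$ already satisfy $m_j>M$, and this forces the whole argument to scale $\asymp x$ even when the remaining coordinates are small. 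Hence the summand is $\lesssim x^{-\sigma}$ throughout and first-order Euler–Maclaurin applies cleanly in every coordinate. The product of the leading (bulk) pieces replaces $\sum_{m_j>M}$ by $\int_M^\infty dt_j$ and $\sum_{0\le m_j\le M}$ by $\int_0^M dt_j$; summing over all $E$ (including $E=\emptyset$, which is $S_{\mathrm{box}}$) telescopes through $\prod_j\bigl(\int_0^M+\int_M^\infty\bigr)=\int_{[0,\infty)^r}$ and leaves exactly $\int_{[0,\infty)^r}F-\int_{[0,M]^r}F$, which differs from the required integral difference by $O(x^{r-1-\sigma})$.

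It remains to bound the non-bulk Euler–Maclaurin terms—the boundary contributions $\tfrac12F|_{\text{endpoint}}$ and the periodic-Bernoulli remainders $\int\overline{B_1}(\{t_j\})\,\partial_{t_j}F\,dt_j$, where $\overline{B_1}(u)=u-\tfrac12$—and this is the heart of the proof and the place where the hypothesis $|t|\le2\pi x/C$ with $C>1$ is indispensable. A boundary factor replaces one integration over a length-$\asymp x$ interval and so costs a factor $\asymp x^{-1}$ against the bulk, giving $O(x^{r-1-\sigma})$; the danger is the Bernoulli remainder, because $\partial_{t_j}F$ carries a factor $\asymp s$ of size $\asymp|t|$, and the crude absolute bound loses this entire power of $|t|$, which is fatal once $\sigma\le r$. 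I would instead insert the Fourier expansion $\overline{B_1}(\{t\})=-\sum_{k\ge1}\sin(2\pi kt)/(\pi k)$ and integrate by parts: because every coordinate in a nonempty-$E$ term sits at scale $\asymp x$, the instantaneous frequency of $F$ there is $\asymp|t|\,w_j/(a+\mathbf t\cdot\mathbf w)\lesssim|t|/x\le2\pi/C<2\pi$, so each oscillatory integral is non-resonant against the frequencies $2\pi k\ge2\pi$ and integration by parts recovers the spurious power of $|t|$, leaving every remainder of the admissible size $O(x^{r-1-\sigma})$. This is exactly the one-variable mechanism behind the Riemann approximation recalled before the statement, now applied in each coordinate; I expect arranging this estimate uniformly in $s$ across the strip, together with the signed bookkeeping over the subsets $E$, to be the main technical obstacle.

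Finally, to reach the full range $r-1<\sigma_1<\sigma<\sigma_2$, including $\sigma\le r$ where the defining series diverges, I would observe that the Euler–Maclaurin manipulation yields an exact identity whose right-hand side—the box sum, the elementary correction, and the Bernoulli remainder integrals—is meromorphic in $s$ and, after the single integration by parts above, converges and is holomorphic for $\sigma>r-1$. Since it coincides with $\zeta_r(s,a,\mathbf w)$ on $\sigma>r$, it coincides throughout the strip by analytic continuation, and the uniform bound $O(x^{r-1-\sigma})$ on the remainder then delivers the claimed asymptotic expansion as $x\to\infty$.
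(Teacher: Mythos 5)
Your overall skeleton coincides with the paper's: splitting each coordinate sum at $M=\lfloor x\rfloor$, applying first-order Euler--Maclaurin coordinate by coordinate, organizing the corner terms by subsets, and passing from $\sigma>r$ to $r-1<\sigma\le r$ by analytic continuation is exactly what Lemma \ref{lemA}, Lemma \ref{lemB}, and the limit $N\to\infty$ accomplish; your identification of the correction term as $\int_{[0,\infty)^r}F-\int_{[0,x]^r}F$ is in fact tidier than Lemma \ref{lemB}, which the paper verifies only for $r=2,3$. Where you genuinely depart from the paper is the treatment of the Bernoulli remainders: the paper bounds them trivially and lets the $O$-constants depend on $s$ (the hypothesis $|t|\le2\pi x/C$ is never invoked anywhere in its proof), whereas you aim for a bound uniform in $t$ --- which is the stronger statement that the later application in Theorem \ref{main3} (where one sets $x=t$) actually requires. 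Your instinct that this uniformity is the heart of the matter, and that it must come from oscillation, is right; but your execution of that step has a real gap.

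The gap is the non-resonance claim for coordinates $j\notin E$. In the term indexed by a nonempty $E$, the only lower bound on $a+\mathbf{t}\cdot\mathbf{w}$ is $a+M\sum_{i\in E}w_i$, so the frequency in the $t_j$-direction is bounded only by
\[
 \frac{|t|\,w_j}{a+M\sum_{i\in E}w_i}
 \;\le\;\frac{2\pi}{C}\cdot\frac{w_j}{\sum_{i\in E}w_i}\,(1+o(1)),
\]
and the factor $w_j/\sum_{i\in E}w_i$ can be arbitrarily large: your chain ``$\lesssim|t|/x\le2\pi/C<2\pi$'' converts a $\lesssim$ carrying a weight-dependent constant into a comparison with the absolute threshold $2\pi$. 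Concretely, take $r=2$, $w_1=1$, $w_2=100$, $C=2$, $t=\pi x$, $E=\{1\}$: near $t_1\approx x$, $t_2\approx0$ the frequency in $t_2$ is about $100\pi$, so the Fourier modes $k\le50$ are resonant, the first-derivative test fails, and each of the $\asymp x$ values of $m_1$ for which a stationary point exists contributes $\asymp x^{1/2-\sigma}$ in absolute value, for a total of order $x^{3/2-\sigma}$ --- overshooting the target $O(x^{r-1-\sigma})=O(x^{1-\sigma})$ by $x^{1/2}$. (The theorem is not false there; the loss comes from taking absolute values, but your argument supplies no mechanism to recover the cancellation.) The repair is to order the Euler--Maclaurin steps: integrate the coordinates $j\in E$ first, where $t_j>M$ genuinely forces the frequency below $(2\pi/C)\,x/M<2\pi$ for large $x$, so your oscillation argument is sound and each such step deposits a factor $1/(s-1),1/(s-2),\dots$ of size $\asymp1/|t|$ in front of its main term; then treat the coordinates $j\notin E$ with the crude bound, whose loss of one factor $|s|$ per step is paid for by those prefactors. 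With that reordering your proof closes, and it then establishes the $t$-uniform version of the theorem, which the paper's own argument does not.
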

\begin{corollary}[Theorem 3 in \cite{Miyagawa2018}]
 Let $1<\sigma_1<\sigma_2,\ x\ge 1$, and $ C>1 $.
 If $s=\sigma+it \in \mathbb{C} $ with $\sigma_1<\sigma<\sigma_2$ and $|t|\le 2\pi x/C$, then 
 \begin{align*}
 \zeta_2(s,a,(w_1,w_2)) 
 &=\sum_{0\le m_1 \le x} \sum_{0\le m_{2} \le x} 
   \frac{1}{(a+m_1 w_1+m_{2} w_2)^s}\\
 &\quad 
  +\frac{(a+x w_1)^{2-s} 
  +(a+x w_2)^{2-s} 
  -(a+x w_1+x w_2)^{2-s}}{(s-1)(s-2)w_1 w_2} 
  +O(x^{1-\sigma} )  
 \end{align*}
as $x\rightarrow \infty$.  
\end{corollary}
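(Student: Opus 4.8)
The plan is to obtain this corollary as the direct specialization of \Cref{main2} to the case $r=2$; no new analytic input is required beyond checking that the hypotheses line up and expanding the finite subset sum explicitly.

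First I would verify that the assumptions of \Cref{main2} are met. With $r=2$ the condition $r-1<\sigma_1<\sigma_2$ reads $1<\sigma_1<\sigma_2$, which is exactly the hypothesis of the corollary, and the remaining constraints $\sigma_1<\sigma<\sigma_2$, $x\ge 1$, $C>1$, and $|t|\le 2\pi x/C$ are identical in both statements. Hence \Cref{main2} applies verbatim, giving the displayed double sum $\sum_{0\le m_1\le x}\sum_{0\le m_2\le x}(a+m_1w_1+m_2w_2)^{-s}$ together with the error term $O(x^{r-1-\sigma})=O(x^{1-\sigma})$, which already matches the corollary.

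The one computation to carry out is the evaluation of the subset sum in \Cref{main2}. For $r=2$ there are exactly three nonempty subsets of $\{w_1,w_2\}$: the singletons $\{w_1\}$ and $\{w_2\}$, each contributing with sign $(-1)^{1}=-1$, and the full set $\{w_1,w_2\}$, contributing with sign $(-1)^{2}=+1$. Substituting $r=2$, so that $(s-1)\cdots(s-r)=(s-1)(s-2)$, $w_1\cdots w_r=w_1w_2$, and $r-s=2-s$, the term \emph{subtracted} in \Cref{main2} becomes
\[
 -\sum_{\substack{E\subseteq\{w_1,w_2\}\\E\ne\emptyset}}(-1)^{\#E}\frac{\bigl(a+x\sum_{e\in E}e\bigr)^{2-s}}{(s-1)(s-2)w_1w_2}
 =\frac{(a+xw_1)^{2-s}+(a+xw_2)^{2-s}-(a+xw_1+xw_2)^{2-s}}{(s-1)(s-2)w_1w_2}.
\]

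The only point requiring care—and the nearest thing to an obstacle—is the sign bookkeeping: \Cref{main2} subtracts the subset sum, so the overall minus sign flips each of the three contributions, turning the two singleton terms positive and the full-set term negative. This yields precisely the signs displayed in the corollary, and the two formulas then agree term by term, completing the proof.
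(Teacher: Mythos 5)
Your proposal is correct and matches the paper's intent exactly: the paper states this corollary as an immediate consequence of \Cref{main2} with $r=2$ (offering no separate proof), and your verification of the hypotheses, the enumeration of the three nonempty subsets of $\{w_1,w_2\}$, and the sign bookkeeping under the overall minus sign reproduce the displayed formula term by term. Nothing is missing.
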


Using Theorem \ref{main2}, we obtain the mean value theorem.
\begin{theorem}\label{main3}
Assume that $d=1$.
For $s=\sigma+it \in\mathbb{C}$ with $r-1<\sigma\le r$, we have
\begin{align*}
 \int_1^T |\zeta_r(\sigma+it,a,\mathbf{w})|^2 \,dt
 =\tilde{\zeta_{r}}(\sigma,a,\mathbf{w})T 
 +
  \begin{cases}
   O(T^{2r-2\sigma} \log{T}) 
   &\text{if }  r-1/2<\sigma \le r-1/4,	\\
   O({T}^{1/2})
   &\text{if } r-1/4<\sigma \le r,
  \end{cases}
\end{align*}
and 
\begin{align*}
 \int_1^T |\zeta_r(\sigma+it,a,\mathbf{w})|^2 \,dt 
 \ll T^{2r-2\sigma} \log T \quad \text{if } r-1<\sigma\le r-1/2
\end{align*}
as $T \rightarrow +\infty$.
\end{theorem}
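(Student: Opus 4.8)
The plan is to substitute the finite-sum approximation of \Cref{main2} into the integral and expand the square. Fix $\sigma\in(r-1,r]$ and set $x=CT/(2\pi)$ with $C>1$, so that the hypothesis $|t|\le 2\pi x/C$ of \Cref{main2} holds for all $t\in[1,T]$; then, writing $\lambda_{\mathbf m}=a+\mathbf m\cdot\mathbf w$, throughout the range of integration $\zeta_r(s,a,\mathbf w)=S_x(s)+R_x(s)+O(x^{r-1-\sigma})$, where $S_x(s)=\sum_{0\le m_i\le x}\lambda_{\mathbf m}^{-s}$ and $R_x(s)$ is the explicit subset sum, which satisfies $R_x(s)\ll x^{r-\sigma}/|t|^{r}$ since $|(s-1)\cdots(s-r)|\asymp|t|^{r}$ for $t\ge 1$. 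Expanding $|\zeta_r|^2=|S_x|^2+2\Rea(S_x\overline{R_x})+|R_x|^2+(\text{error cross terms})$ and integrating, the main term comes from the diagonal of $\int_1^T|S_x|^2\,dt=\sum_{\mathbf m,\mathbf n}\lambda_{\mathbf m}^{-\sigma}\lambda_{\mathbf n}^{-\sigma}\int_1^T(\lambda_{\mathbf n}/\lambda_{\mathbf m})^{it}\,dt$. As $w_1,\dots,w_r$ are linearly independent over $\Q$, $\lambda_{\mathbf m}=\lambda_{\mathbf n}$ forces $\mathbf m=\mathbf n$, and these diagonal terms contribute $(T-1)\sum_{\mathbf m\le x}\lambda_{\mathbf m}^{-2\sigma}=\tilde{\zeta_{r}}(\sigma,a,\mathbf w)\,T+O(1)+O(Tx^{r-2\sigma})$, the series converging because $2\sigma>r$ when $\sigma>r-1/2$.

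The off-diagonal terms are the heart of the argument. Using $\int_1^T(\lambda_{\mathbf n}/\lambda_{\mathbf m})^{it}\,dt\ll|\log(\lambda_{\mathbf n}/\lambda_{\mathbf m})|^{-1}$ together with the trivial bound $O(T)$, their contribution is $\ll\sum_{\mathbf m\ne\mathbf n}\lambda_{\mathbf m}^{-\sigma}\lambda_{\mathbf n}^{-\sigma}\min(T,|\log(\lambda_{\mathbf n}/\lambda_{\mathbf m})|^{-1})$. I would estimate this dyadically: on a shell $\lambda_{\mathbf m}\asymp\lambda_{\mathbf n}\asymp V$ we have $|\log(\lambda_{\mathbf n}/\lambda_{\mathbf m})|\asymp\delta/V$ with $\delta=|\lambda_{\mathbf m}-\lambda_{\mathbf n}|$, and splitting into dyadic gap ranges $\delta\in[\Delta,2\Delta]$ reduces the bound to counting the $\mathbf m$ with $\lambda_{\mathbf m}$ in a fixed interval of length $\asymp\Delta$. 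Since $N(V)=\#\{\mathbf m:\lambda_{\mathbf m}\le V\}\asymp V^{r}$ has average density $\asymp V^{r-1}$ per unit length, each gap range contributes $\ll V^{2r-2\sigma}$; summing over the $O(\log V)$ ranges and over dyadic shells $V\le x$ then yields $\ll x^{2r-2\sigma}\log x=O(T^{2r-2\sigma}\log T)$.

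It remains to bound the terms involving $R_x$ and the $O(x^{r-1-\sigma})$ error. From $R_x(s)\ll x^{r-\sigma}/|t|^{r}$ we get $\int_1^T|R_x|^2\,dt\ll x^{2r-2\sigma}\int_1^T t^{-2r}\,dt\ll T^{2r-2\sigma}$. For the cross term $\int_1^T S_x\overline{R_x}\,dt$ a direct Cauchy--Schwarz bound is too lossy near $\sigma=r-1/4$; instead I would integrate by parts in $t$, playing the oscillation $(\mu_E/\lambda_{\mathbf m})^{it}$, with $\mu_E=a+x\sum_{e\in E}e$, against the smooth factor of size $\asymp t^{-r}$, treating the finitely many resonant indices $\lambda_{\mathbf m}=\mu_E$ (which contribute $O(x^{r-2\sigma})$) separately from the rest, the latter again controlled by the log-denominator and short-interval estimates of the previous paragraph; this keeps the cross term $\ll T^{2r-2\sigma}\log T$. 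Finally $\int_1^T|O(x^{r-1-\sigma})|^2\,dt\ll T^{2r-1-2\sigma}$, and the remaining mixed terms are $\ll T^{r-\sigma}$ by Cauchy--Schwarz, all negligible.

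Assembling these estimates gives $\int_1^T|\zeta_r(\sigma+it,a,\mathbf w)|^2\,dt=\tilde{\zeta_{r}}(\sigma,a,\mathbf w)\,T+O(T^{2r-2\sigma}\log T)$, the error absorbing the truncation term $Tx^{r-2\sigma}\ll T^{r+1-2\sigma}$ since $r\ge1$. For $r-1<\sigma\le r-1/2$ we have $2r-2\sigma\ge1$, so $T^{2r-2\sigma}\log T$ dominates the linear term and gives the bound $\ll T^{2r-2\sigma}\log T$; for $r-1/2<\sigma\le r-1/4$ the term $\tilde{\zeta_{r}}T$ is the main term with error $O(T^{2r-2\sigma}\log T)$; and for $r-1/4<\sigma\le r$, where $2r-2\sigma<1/2$, the error is $T^{2r-2\sigma}\log T=o(T^{1/2})$, hence $O(T^{1/2})$. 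The principal obstacle is the off-diagonal estimate: because the values $\mathbf m\cdot\mathbf w$ are real and may cluster, the short-interval count of the $\lambda_{\mathbf m}$ (equivalently, the count of integer vectors $\mathbf k=\mathbf m-\mathbf n$ with $|\mathbf k\cdot\mathbf w|$ small) is not automatic and must be extracted from quantitative information on the distribution---that is, the linear independence over $\Q$---of $w_1,\dots,w_r$.
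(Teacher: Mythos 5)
Your overall strategy (approximate $\zeta_r$ by a finite sum via Theorem~\ref{main2}, expand the square, read off the main term from the diagonal and bound the off-diagonal by the oscillation of $(\lambda_{\mathbf n}/\lambda_{\mathbf m})^{it}$) is the same as the paper's, but the proposal has a genuine gap at precisely the step you yourself flag as ``the principal obstacle,'' and the way you propose to close it cannot work. You bound the off-diagonal contribution by $\sum_{\mathbf m\ne\mathbf n}\lambda_{\mathbf m}^{-\sigma}\lambda_{\mathbf n}^{-\sigma}\min\bigl(T,|\log(\lambda_{\mathbf n}/\lambda_{\mathbf m})|^{-1}\bigr)$ and then count, dyadically, the points $\lambda_{\mathbf n}$ lying in intervals of length $\Delta$ using the \emph{average} density $\asymp V^{r-1}$; but average density gives no short-interval control, and $\Q$-linear independence of $w_1,\dots,w_r$ --- which you name as the source of the missing quantitative information --- is a purely qualitative hypothesis: it gives no lower bound at all on $|\mathbf k\cdot\mathbf w|$ for nonzero integer vectors $\mathbf k$ (Liouville-type tuples are $\Q$-linearly independent yet admit $|\mathbf k\cdot\mathbf w|<e^{-|\mathbf k|}$), so no usable short-interval count can be extracted from it. The paper's proof is structured so that no such count is ever needed: in $W_1$ it fixes $\mathbf m$ and $n_1,\dots,n_{r-1}$ and lets only $n_r$ vary, so that the values $\lambda_{\mathbf n}$ run along an arithmetic progression of spacing $w_r$; the reciprocal-gap sum along each progression is $\sum_{k\le K}(k+\varepsilon)^{-1}\ll\log K$, and there are $\ll\lambda_{\mathbf m}^{r-1}$ progressions (the same device as in the proof of Theorem~\ref{main1}). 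Whatever one thinks of the residual nearest-point ($k=0$, small $\varepsilon$) term there, the paper supplies the combinatorial structure, whereas your plan postulates it. Note also that your cross term $\int_1^T S_x\overline{R_x}\,dt$ is ``controlled by \dots\ the short-interval estimates of the previous paragraph,'' i.e., it leans on the same unproven estimate, so it is not closed either.

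Two further, secondary, gaps. First, you assume $w_1,\dots,w_r$ linearly independent over $\Q$, which is not a hypothesis of Theorem~\ref{main3}: the theorem holds for arbitrary $w_i\in\R_{>0}$, and its main-term coefficient $\tilde{\zeta_r}$ is defined by the condition $\mathbf m\cdot\mathbf w=\mathbf n\cdot\mathbf w$, not $\mathbf m=\mathbf n$. Under a rational dependence (say $w_1=w_2$) your diagonal/off-diagonal split is wrong: pairs with $\mathbf m\ne\mathbf n$ but $\lambda_{\mathbf m}=\lambda_{\mathbf n}$ have $\log(\lambda_{\mathbf n}/\lambda_{\mathbf m})=0$, so your off-diagonal bound is meaningless for them, and they in fact contribute to the main term at order $T$; the split must be made along $\lambda_{\mathbf m}=\lambda_{\mathbf n}$, as in the paper. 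Second, your choice of a $t$-independent truncation $x=CT/(2\pi)$ forces you to carry the explicit subset-sum term $R_x$ and its cross terms through the whole argument. The paper instead sets $x=t$ in Theorem~\ref{main2}, which makes that term $O(t^{-\sigma})$ and absorbs it into $O(t^{r-1-\sigma})$, so no analogue of $R_x$ ever appears; the price is a $t$-dependent summation range, handled by integrating over $M\le t\le T$ with $M=\max_k\{m_k,n_k\}$ and estimating the resulting tail sum $U$ and the $M$-weighted diagonal sum, after which a single Cauchy--Schwarz application produces exactly the exponents $r-1/2$ and $r-1/4$ appearing in the statement. If you repair your argument, that is the cleaner route to follow.
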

\bigskip

\begin{remark}
The case $r=2$ of Theorems \ref{main1} and \ref{main3}
corresponds to Theorems 1 and 2 in \cite{Miyagawa2018}, respectively.
However, in \cite{Miyagawa2018}, the condition $d=1$, namely that the parameters are linearly dependent over $\Q$,
is not explicitly stated.
In fact, this condition is necessary for the validity of those results.
Therefore, in the present paper we explicitly impose the assumption $d=1$ in Theorems \ref{main1} and \ref{main3}.
\end{remark}

\section{Proof of Theorem \ref{main1}} 
In this section, we give a proof of Theorem \ref{main1}.
\begin{proof}[Proof of Theorem \ref{main1}] 
%
Since 
\begin{align*}
|\zeta_{r}(s,a,\mathbf{w})|^{2} &=\zeta_{r}(s,a,\mathbf{w})\overline{\zeta_{r}(s,a,\mathbf{w})}\\
 &=\tilde{\zeta_{r}}(\sigma,a,\mathbf{w})
  +\sum_{\substack{m_1,\dots,m_r\ge0\\n_{1},\dots,n_{r}\ge0\\
\mathbf{m}\cdot\mathbf{w}\ne\mathbf{n}\cdot\mathbf{w}}}
  \frac{1}{(a+\mathbf{m}\cdot\mathbf{w})^{\sigma}(a+\mathbf{n}\cdot\mathbf{w})^{\sigma}}
  \left(
   \frac{a+\mathbf{n}\cdot\mathbf{w}}{a+\mathbf{m}\cdot\mathbf{w}}
  \right)^{it},
\end{align*}
we have 
\begin{align*}
 &\int_{1}^{T}|\zeta_{r}(s,a,\mathbf{w})|^{2}dt\\
 &=\tilde{\zeta_{r}}(\sigma,a,\mathbf{w})(T-1)\\
  &\quad
   +\sum_{\substack{m_1,\dots,m_r\ge0\\n_{1},\dots,n_{r}\ge0\\
\mathbf{m}\cdot\mathbf{w}\ne\mathbf{n}\cdot\mathbf{w}}}
   \frac{1}{(a+\mathbf{m}\cdot\mathbf{w})^{\sigma}(a+\mathbf{n}\cdot\mathbf{w})^{\sigma}}
   \cdot
   \frac{e^{iT\log\{(a+\mathbf{n}\cdot\mathbf{w})/(a+\mathbf{m}\cdot\mathbf{w})\}}-e^{i\log\{(a+\mathbf{n}\cdot\mathbf{w})/(a+\mathbf{m}\cdot\mathbf{w})\}}}{i\log\{(a+\mathbf{n}\cdot\mathbf{w})/(a+\mathbf{m}\cdot\mathbf{w})\}}.
\end{align*}
The second term on the right-hand side is estimated as
\begin{align*}
 &\ll\sum_{\substack{m_1,\dots,m_r\ge0\\
n_{1},\dots,n_{r}\ge0\\
\mathbf{m}\cdot\mathbf{w}<\mathbf{n}\cdot\mathbf{w}
}
}\frac{1}{(a+\mathbf{m}\cdot\mathbf{w})^{\sigma}(a+\mathbf{n}\cdot\mathbf{w})^{\sigma}}\cdot\frac{1}{\log\{(a+\mathbf{n}\cdot\mathbf{w})/(a+\mathbf{m}\cdot\mathbf{w})\}}\\
 &=
  \left(
  \sum_{\substack{m_1,\dots,m_r\ge0\\
n_{1},\dots,n_{r}\ge0\\
a+\mathbf{m}\cdot\mathbf{w}<a+\mathbf{n}\cdot\mathbf{w}<2(a+\mathbf{m}\cdot\mathbf{w})}}
  +\sum_{\substack{m_1,\dots,m_r\ge0\\n_{1},\dots,n_{r}\ge0\\
a+\mathbf{n}\cdot\mathbf{w}\ge2(a+\mathbf{m}\cdot\mathbf{w})}}
  \right)\\
 &\qquad \qquad
 \frac{1}{(a+\mathbf{m}\cdot\mathbf{w})^{\sigma}(a+\mathbf{n}\cdot\mathbf{w})^{\sigma}}\cdot\frac{1}{\log\{(a+\mathbf{n}\cdot\mathbf{w})/(a+\mathbf{m}\cdot\mathbf{w})\}},
\end{align*}
say $V_{1}+V_{2}$. 
Since
\[
\log\left(\frac{a+\mathbf{n}\cdot\mathbf{w}}{a+\mathbf{m}\cdot\mathbf{w}}\right)\ge \log 2
\]
for all terms appearing in $V_2$, and since $\sigma>r$, we have $V_2=O(1)$.

Now, let us consider $V_{1}$. 
The range of $n_{r}$ satisfying
the inequalities $a+\mathbf{m}\cdot\mathbf{w}<a+\mathbf{n}\cdot\mathbf{w}<2(a+\mathbf{m}\cdot\mathbf{w})$
of the summation condition of $V_{1}$ is 
\begin{align}\label{range_of_nr}
 \frac{1}{w_{r}}
 \sum_{j=1}^{r-1} (m_{j}-n_{j})w_{j} 
 +m_r<n_{r}<\frac{a}{w_{r}}+\frac{1}{w_{r}}
 \sum_{j=1}^{r-1}(2m_{j}-n_{j})w_{j}+2m_r. 
\end{align}
Let 
$\varepsilon=\varepsilon(m_1,\dots,m_r,n_{1},\dots,n_{r-1})$, 
$\delta=\delta(a,m_1,\dots,m_r,n_{1},\dots,n_{r-1})$
be the quantities satisfying $0\le\varepsilon,\delta<1$ and 
\begin{align*}
 \frac{1}{w_{r}}\sum_{j=1}^{r-1}
 (m_{j}-n_{j})w_{j}+m_r+\varepsilon
 &\in\mathbb{Z},\\
 \frac{a}{w_{r}}
 +\frac{1}{w_{r}}\sum_{j=1}^{r-1}
  (2m_{j}-n_{j})w_{j}+2m_r-\delta
  &\in\mathbb{Z}. 
\end{align*}
Then 
\begin{align*}
 K 
 &\coloneqq\left(
  \frac{a}{w_{r}}
  +\frac{1}{w_{r}}
  \sum_{j=1}^{r-1}(2m_{j}-n_{j})w_{j}
  +2m_r-\delta
  \right)
  -\left(
   \frac{1}{w_{r}}
   \sum_{j=1}^{r-1}(m_{j}-n_{j})w_{j}+m_r
   +\varepsilon
  \right)\\
 &=\frac{a}{w_{r}}
 +\frac{1}{w_{r}}
 \sum_{j=1}^{r-1}m_{j}w_{j}+m_r
 -\varepsilon-\delta
\end{align*}
is an integer, and, by \eqref{range_of_nr}, $n_{r}$ can be rewritten as 
\[
 n_{r}
 =\frac{1}{w_{r}}
  \sum_{j=1}^{r-1}(m_{j}-n_{j})w_{j}
  +m_r+\varepsilon+k
\]
for some $k=0,1,\dots,K$. 
From this, we have $a+\mathbf{n}\cdot\mathbf{w}=a+\mathbf{m}\cdot\mathbf{w}+w_{r}\varepsilon+w_{r}k$,
and then 
\begin{align*}
 \log\frac{a+\mathbf{n}\cdot\mathbf{w}}{a+\mathbf{m}\cdot\mathbf{w}} &=\log\left(1+\frac{w_{r}\varepsilon+w_{r}k}{a+\mathbf{m}\cdot\mathbf{w}}\right)\asymp\frac{w_{r}(k+\varepsilon)}{a+\mathbf{m}\cdot\mathbf{w}}.
\end{align*}
Since $d=1$, there exist $\lambda>0$ and positive integers
$p_1,\dots,p_r$ such that $w_j=\lambda p_j$ $(1\le j\le r)$.
Hence
\begin{equation}\label{linear}
 \frac{1}{w_r}\sum_{j=1}^{r-1}(m_j-n_j)w_j+m_r
 =
 \frac{1}{p_r}\sum_{j=1}^{r-1}(m_j-n_j)p_j+m_r,
\end{equation}
so $\varepsilon$ is either $0$ or satisfies $\varepsilon\ge 1/p_r$.
Moreover, since $\mathbf m\cdot\mathbf w\ne \mathbf n\cdot\mathbf w$, we have
$k+\varepsilon>0$. Therefore,
\begin{equation}\label{<<log(2+K)}
    \sum_{\substack{0\le k\le K\\ k+\varepsilon>0}} \frac{1}{k+\varepsilon}
 \ll \log(2+K).
\end{equation}
Thus we have
\begin{align*}
 V_{1} 
 &=\sum_{\substack{m_1,\dots,m_r\ge0\\
 n_{1},\dots,n_{r}\ge0\\
 a+\mathbf{m}\cdot\mathbf{w}<a+\mathbf{n}\cdot\mathbf{w}<2(a+\mathbf{m}\cdot\mathbf{w})}}
 \frac{1}{(a+\mathbf{m}\cdot\mathbf{w})^{\sigma}
 (a+\mathbf{n}\cdot\mathbf{w})^{\sigma}}
 \cdot
 \frac{1}{\log\{(a+\mathbf{n}\cdot\mathbf{w})/(a+\mathbf{m}\cdot\mathbf{w})\}}\\
 &\ll
 \sum_{m_1,\dots,m_r\ge0}
 \sum_{\substack{K\asymp 1+m_1+\cdots+m_r \\ 0 \le n_1, \dots , n_{r-1} \ll K}} 
 \sum_{k=0}^K
 \frac{1}{(a+\mathbf{m}\cdot\mathbf{w})^{\sigma}(a+\mathbf{m}\cdot\mathbf{w}+w_{r}k)^{\sigma}}
 \cdot
 \frac{a+\mathbf{m}\cdot\mathbf{w}}{w_{r}(k+\varepsilon)}\\
 &\ll
 \sum_{m_1,\dots,m_r\ge0}
 \frac{K^{r-1} \log K}{(a+\mathbf{m}\cdot\mathbf{w})^{2\sigma-1}}.
\end{align*}
Using  
$K\asymp a+\mathbf{m}\cdot\mathbf{w}\asymp1+m_1+\cdots+m_r$, 
\begin{gather*}
 V_1
 \ll
 \sum_{m_1,\dots,m_r\ge0}
 \frac{\log(1+m_1+\cdots +m_r)}{(1+m_1+\cdots +m_r)^{2\sigma-r}}
 =\sum_{N=0}^\infty 
 \frac{\log(1+N)}{(1+N)^{2\sigma-r}} \binom{N+r-1}{r-1}.
\end{gather*}
Since $\binom{N+r-1}{r-1}\asymp N^{r-1}$ and $\sigma>r$, we have
\begin{gather*}
 V_1
 \ll
 \sum_{N=0}^\infty 
 \frac{\log(1+N)}{(1+N)^{2\sigma-2r+1}}
 \ll 1.
\end{gather*}
This finishes the proof.
\end{proof}

\bigskip

 \begin{remark}
 The assumption $d=1$ is essential in the above argument.
 Indeed, the estimate \eqref{<<log(2+K)}
 requires a uniform lower bound for $\varepsilon$.
 If $d=1$, then there exist $\lambda>0$ and positive integers
 $p_1,\dots,p_r$ such that $w_j=\lambda p_j$ $(1\le j\le r)$.
 Hence the quantity \eqref{linear} 
 takes values in a lattice with denominator $p_r$,
 so that $\varepsilon$ is either $0$ or satisfies
 $\varepsilon\ge 1/p_r$.
 On the other hand, if $d\ge2$, the numbers
 \[
 \frac{1}{w_r}\sum_{j=1}^{r-1}(m_j-n_j)w_j+m_r
 \]
 can approach integers arbitrarily closely as
 $(m_1,\dots,m_r,n_1,\dots,n_{r-1})$ vary.
 In this case $\varepsilon$ may become arbitrarily small,
 and therefore the bound \eqref{<<log(2+K)} cannot be obtained uniformly.
 \end{remark}

\section{Proof of Theorem \ref{main2}}
\begin{lemma}[Lemma 4.10 in \cite{Titchmarsh1986}]\label{exp sum}
Let $f(\xi)$ be a real function with a continuous and steadily increasing derivative $f'(\xi)$ in $(p,q)$. 
Let $g(\xi)$ be a real positive decreasing function with a continuous derivative $g'(\xi)$, satisfying that $|g'(\xi)|$ is steadily decreasing.

Then we have
\begin{align}\label{exponential-sum}
 \begin{split}
 \sum_{p<n\le q} g(n) e^{2\pi i f(n)}
 &=\sum_{\substack{ \nu \in \mathbb{Z} \\ f'(p)-\varepsilon<\nu<f'(q)+\varepsilon }}
  \int_p^q g(\xi) e^{2\pi i (f(\xi)+\nu \xi)} d\xi \\
 &\quad
  +O(g(p) \log(f'(q)-f'(p)+2) )+O(|g'(p)|)
 \end{split}
\end{align}
for an arbitrary $\varepsilon \in (0,1)$.
\end{lemma}

\begin{lemma} \label{lemA}
Let $p_j, q_j$ be positive numbers such that $0 \le p_j < q_j$ for each $j=1,\dots,r$, and suppose that $d$ of $q_j$ tend to infinity, where $d=0,1,\dots,r-1$.  
Let $s=\sigma+it \in \mathbb{C}$ with $\sigma > r - 1$, 
and let $ P=\max \{ p_j \ | \ 1 \le j \le r \} $ and $C > 1$. If $|t| \le 2\pi P / C$, 
then we have
 \begin{align}
 \begin{split}
  &\sum_{p_{1}\le m_1 \le q_{1}}
   \cdots
   \sum_{p_{r}\le m_r \le q_{r}}
   \frac{1}{(a+\mathbf{m}\cdot\mathbf{w})^s}\\
 &=\frac{1}{(s-1)\cdots (s-r) w_1\cdots w_{r}}
  \sum_{\substack{b_{j}\in\{p_{j},q_{j}\,|q_{j}<\infty\}\\j=1,\dots,r}}
  (-1)^{\#\{b_{j}=q_{j}\}}
  (a+b_{1}w_1+\cdots+b_{r}w_{r})^{r-s}\\
  &\quad
  +O\left(\sum_{\substack{b_{j}\in\{p_{j},q_{j}\,|q_{j}<\infty\}\\j=1,\dots,r}}
  (a+b_{1}+\cdots+b_{r})^{r-1-\sigma}
  \right),
 \end{split}\label{sum_{p,q}}
 \end{align}
 where the $O$-constants depend on $|s|$ and $a, w_1,\dots,w_r$.
\end{lemma}
\begin{proof}
We prove the lemma by induction on $r$.
First, we consider the sum
\[
 \sum_{p_1 \le m_1 \le q_1} \frac{1}{(a+m_1 w_1)^s}
 =\sum_{p_1 \le m_1 \le q_1}
  (a+m_1 w_1)^{-\sigma} e^{-it \log(a+m_1 w_1)}.
\]
We apply Lemma \ref{exp sum} to the exponential sum
\[
\sum_{p_1 \le m_1 \le q_1} g(m_1) e^{2\pi i f(m_1)},
\]
where we set 
\[
 g(\xi)
 =(a+\xi w_1)^{-\sigma}, \quad 
 f(\xi)
 =-\frac{t}{2\pi} \log(a+\xi w_1).
\]
Since $t \in \mathbb{R}$ is fixed, we see that $f'(\xi)=-t w_1/{2\pi(a+\xi w_1)}$ is a real, continuous, strictly increasing function of $\xi \in (p_1, q_1)$.
Then $f'(\xi)$ is steadily increasing and negative.
Moreover, $g(\xi)$ is real, positive, and strictly decreasing in $\xi$, with derivative
\[
 g'(\xi)=-\sigma w_1 (a+\xi w_1)^{-\sigma-1},
\]
which is continuous and steadily decreasing in absolute value.
Thus, the hypotheses of Lemma \ref{exp sum} are satisfied.

In the case where $q_j<\infty \ (j=1,\dots,r)$, applying the lemma yields
\begin{align*}
 \sum_{p_1 \le m_1 \le q_1} 
 \frac{1}{(a+m_1 w_1)^s}
 &=\sum_{\substack{ \nu \in \mathbb{Z} \\ f'(p_1)-\varepsilon<\nu<f'(q_1)+\varepsilon }}
 \int_{p_1}^{q_1} g(\xi) e^{2\pi i (f(\xi)+\nu \xi)} \,d\xi
 +O\left( (a+p_1)^{-\sigma} \right).
\end{align*} 
Since \[
 |f'(\xi)|
 =\left| \frac{tw_1}{2\pi(a+\xi w_1)} \right|
 \le 
  \frac{1}{2\pi} \left| \frac{2\pi P}{C} 
  \cdot 
  \frac{w_1}{a+\xi w_1} \right|
 \le \frac{1}{C}<1,
\]
we see that for sufficiently large $q_1$, it is possible to choose $\varepsilon$ so that 
$-1< f'(p_1)-\varepsilon<0<f'(q_1)+\varepsilon<1$
holds, in which case only the term with $\nu=0$ contributes to the sum on the right-hand side.
Then we have
\begin{align}
 \sum_{p_1 \le m_1 \le q_1} 
 \frac{1}{(a+m_1 w_1)^s}
 &=\int_{p_1}^{q_1} g(\xi) e^{2\pi i f(\xi)} \,d\xi
 +O\left( (a+p_1)^{-\sigma} \right) \nonumber\\
 &=\int_{p_1}^{q_1} \frac{1}{(a+\xi w_1)^s}d\xi +O\left( (a+p_1)^{-\sigma} \right) \nonumber\\
 &=\frac{ (a+p_1 w_1)^{1-s}-(a+q_1 w_1)^{1-s}}{(s-1) w_1} 
  +O\left((a+p_1)^{-\sigma} \right). \label{sum(1/(a+mw))}
\end{align} 

To prove the general case, we proceed by induction.
Suppose that the lemma holds for a fixed integer $r=k\ge1$; that is,
 \begin{align*}
 \begin{split}
 &\sum_{p_1 \le m_1 \le q_1} 
 \cdots 
 \sum_{p_k \le m_k \le q_k}
 \frac{1}{(a+m_{1}w_1+\cdots+m_{k}w_{k})^s}\\
 &=\frac{1}{(s-1)\cdots (s-k) w_1\cdots w_{k}}
  \sum_{\substack{b_{j}\in\{p_{j},q_{j}\}\\j=1,\dots,k}}
  (-1)^{\#\{b_{j}=q_{j}\}}
  (a+b_{1}w_1+\cdots+b_{k}w_{k})^{k-s}\\
  &\quad
  +O\left(\sum_{\substack{b_{j}\in\{p_{j},q_{j}\}\\j=1,\dots,k}}
  (a+b_{1}+\cdots+b_{k})^{k-1-\sigma}
  \right).
 \end{split}
 \end{align*}
We show that the result also holds for $r=k+1$. Consider the $(k+1)$-fold sum
\begin{align*}
 F
 &\coloneqq
 \sum_{p_1 \le m_1 \le q_1} 
 \cdots 
 \sum_{p_{k+1} \le m_{k+1} \le q_{k+1}}
 \frac{1}{(a+m_{1}w_1+\cdots+m_{k}w_{k}+m_{k+1}w_{k+1})^s}\\
 &=\sum_{p_{k+1} \le m_{k+1} \le q_{k+1}}
 \left(
 \sum_{p_1 \le m_1 \le q_1} 
 \cdots 
 \sum_{p_{k} \le m_{k} \le q_{k}}
 \frac{1}{((a+m_{k+1}w_{k+1})+m_{1}w_1+\cdots+m_{k}w_{k})^s}
 \right).
\end{align*}
By the inductive hypothesis, we have
\begin{align*}
 F
 &=\frac{1}{(s-1)\cdots (s-k) w_1\cdots w_{k}}
  \sum_{\substack{b_{j}\in\{p_{j},q_{j}\}\\j=1,\dots,k}}
  (-1)^{\#\{b_{j}=q_{j}\}}\\
  &\quad\cdot
  \sum_{p_{k+1} \le m_{k+1} \le q_{k+1}}
  ((a+m_{k+1}w_{k+1})+b_{1}w_1+\cdots+b_{k}w_{k})^{k-s}\\
  &\quad
  +O\left(\sum_{\substack{b_{j}\in\{p_{j},q_{j}\}\\j=1,\dots,k}}
  \sum_{p_{k+1}\le m_{k+1}\le q_{k+1}}((a+m_{k+1})+b_{1}+\cdots+b_{k})^{k-1-\sigma}
  \right)\\
 &
 =\frac{1}{(s-1)\cdots (s-k)(s-k-1) w_1\cdots w_{k}w_{k+1}}
  \\
  &\quad
  \cdot
  \sum_{\substack{b_{j}\in\{p_{j},q_{j}\}\\j=1,\dots,k,k+1}}
  (-1)^{\#\{b_{j}=q_{j}\}}
  (a+b_{1}w_1+\cdots+b_{k}w_{k}+b_{k+1}w_{k+1})^{k+1-s}
  \\
  &\quad
  +O\left(\sum_{\substack{b_{j}\in\{p_{j},q_{j}\}\\j=1,\dots,k,k+1}}
  (a+b_{1}+\cdots+b_{k}+b_{k+1})^{k-\sigma}
  \right)
\end{align*}
As in the case of $r=1$, the result follows from Lemma \ref{exp sum}.

In the case where exactly $d$ of $ q_1, \dots, q_r $ are infinite for some $d=1,\dots,r-1$.
First, letting $q_1 \rightarrow \infty$ in \eqref{sum(1/(a+mw))}, we have
\begin{align}
 \sum_{p_1\le m_1<\infty}\frac{1}{(a+m_1 w_1)^s}
 =\frac{(a+p_1 w_1)^{1-s}}{(s-1) w_1} 
  +O\left((a+p_1)^{-\sigma} \right) \label{sum(1/(a+mw)),infty}
\end{align}
for $\sigma>1$. 
The $r$-fold series on the left-hand side of \eqref{sum_{p,q}} includes $d$ infinite series among the $r$ series.
Since the series converges absolutely for $\sigma>d\ (1\le d\le r-1)$, the order of summation can be exchanged accordingly.
Therefore, we may relabel the indices 
$p_j, m_j, q_j$, and consider a series of the form
\begin{align*}
 &\sum_{p_{1}\le m_1 \le q_{1}}
 \cdots
 \sum_{p_{r}\le m_r \le q_{r}}
 \frac{1}{(a+\mathbf{m}\cdot\mathbf{w})^s}\\
 &=\sum_{p_{r}\le m_r \le q_{r}}\cdots \sum_{p_{d+1}\le m_{d+1} \le q_{d+1}}\ \sum_{p_{d}\le m_d < \infty} \cdots\sum_{p_{1}\le m_1 < \infty} \frac{1}{(a+\mathbf{m}\cdot\mathbf{w})^s}
\end{align*}
without loss of generality.
Using the results of \eqref{sum_{p,q}} and \eqref{sum(1/(a+mw))}, a similar computation can be performed.
In particular, by equation \eqref{sum(1/(a+mw)),infty}, since the contributions from the infinite sums do not involve any terms containing $q_j$, the desired formula in the lemma follows.
\end{proof}

\begin{lemma} \label{lemC}
 Let $s=\sigma+it \in\mathbb{C}$ with $\sigma>r-1$.
 Then the series
\begin{align*}
 \begin{split}
  \sum_{m_{1}>N}
   \cdots 
   \sum_{m_{r}>N}
   \frac{1}{(a+\mathbf{m}\cdot\mathbf{w})^s}
 \end{split}
 \end{align*}
 which converges absolutely for $\sigma>r$, 
admits an analytic continuation to the region $\sigma>r-1$, and satisfies the relation
 \begin{align*}
 \begin{split}
  \sum_{m_{1}>N}
   \cdots 
   \sum_{m_{r}>N}
   \frac{1}{(a+\mathbf{m}\cdot\mathbf{w})^s}
 =\frac{(a+w_{1}N+\cdots+w_{r}N)^{r-s}}{(s-1)\cdots (s-r)w_1\cdots w_{r} }
  + O(|s|N^{r-1-\sigma}),
 \end{split}
 \end{align*}
 where the $O$-constants depend on $a$ and $w_1,\dots,w_r$.
\end{lemma}
\begin{proof}
 Let
\begin{align*}
 S_k
 =\sum_{m_{k}>N}
 \cdots 
 \sum_{m_{r}>N}
 \frac{1}{(a+\mathbf{m}\cdot\mathbf{w})^s}
 \qquad (k=1,\dots,r).
\end{align*}
To prove the lemma, we show
\begin{align*}
\begin{split}
 S_k
 &=\frac{(a+m_1w_1+\cdots +m_{r-1}w_{k-1}+w_{k}N+\cdots+w_{r}N)^{r-k+1-s}}{(s-1)\cdots (s-r+k-1)w_k\cdots w_{r}}\\
 &\quad 
  +O(|s|(a+m_1w_1+\cdots+m_{k-1}w_{k-1}+w_{k}N+\cdots+w_rN)^{r-k-\sigma}) \quad (\sigma>r-k)
 \end{split}
\end{align*}
for $k=1,\dots,r$ by backward induction on $k$. 
When $k=r$, by the Euler-Maclaurin summation formula, we have
\begin{align*}
 \sum_{a<n\le b}f(n)
 =\int_a^bf(\xi)d\xi+\int_a^b(\xi-[\xi])f'(\xi)d\xi+(a-[a])f(a)-(b-[b])f(b),
\end{align*}
where the symbol $[\xi]$ means the greatest integer less than or equal to $\xi$. 
Let $f(\xi)=(a+m_1w_1+\cdots+m_{r-1}w_{r-1}+w_r\xi)^{-s}$, and we take $a=N$ and $b\rightarrow \infty$.
For $\sigma>1$, we have
\begin{align*}
 S_r
 &=\sum_{m_r>N}\frac{1}{(a+m_1w_1+\cdots+m_rw_r)^s} \\
 &=\frac{(a+m_1w_1+\cdots+m_{r-1}w_{r-1}+w_rN)^{1-s}}{(s-1)w_r} \\
 & \quad - sw_r\int_{N}^{\infty}\frac{\xi-[\xi]}{(a+m_1w_1+\cdots+m_{r-1}w_{r-1}+w_r\xi)^{s+1}}d\xi \\
 &=\frac{(a+m_1w_1+\cdots+m_{r-1}w_{r-1}+w_rN)^{1-s}}{(s-1)w_r}\\
  &\quad 
   +O(|s|(a+m_1w_1+\cdots+m_{r-1}w_{r-1}+w_rN)^{-\sigma}).
\end{align*}
Moreover, this asymptotic expansion remains valid for $\sigma>0$.

Assuming the assumption holds at $k+1$ and $\sigma>r-k-1$, we have
\begin{align*}
 S_{k}
 &=\sum_{m_{k}>N}S_{k+1}\\
 &=
 \sum_{m_{k}>N}
 \sum_{m_{k+1}>N}
  \cdots
 \sum_{m_r>N}
 \frac{1}{(a+m_1w_1+\dots+m_kw_k+m_{k+1}w_{k+1}+\cdots+m_rw_r)^s}\\
 &=\sum_{m_{k}>N} \biggl\{
 \frac{(a+m_{1}w_{1}+\cdots +m_kw_k+w_{k+1}N+\cdots+w_{r}N)^{r-k-s}}{(s-1)\cdots (s-r+k)w_{k+1}\cdots w_{r}}\\
 &\quad \quad \quad \quad + O(|s|(a+m_1w_1+\cdots+m_{k}w_{k}+w_{k+1}N+\cdots+w_rN)^{r-k-1-\sigma})
 \biggr\}\\
 &=\frac{1}{(s-1)\cdots (s-r+k)w_{k+1}\cdots w_{r}}
 \sum_{m_{k}>N}(a+m_{1}w_{1}+\cdots +m_kw_k+w_{k+1}N+\cdots+w_{r}N)^{r-k-s}\\
 &\quad +O \left(|s|\sum_{m_k>N}(a+m_1w_1+\cdots+m_{k}w_{k}+w_{k+1}N+\cdots+w_rN)^{r-k-1-\sigma}\right).
\end{align*}
Here, the expression inside the $O$-term can be estimated as follows:
\begin{align*}
 &|s|\sum_{m_k>N}(a+m_1w_1+\cdots+m_{k}w_{k}+w_{k+1}N+\cdots+w_rN)^{r-k-1-\sigma} \\
 &=\frac{|s|(a+m_1w_1+\cdots+m_{k-1}w_{k-1}+w_{k}N+\cdots+w_rN)^{r-k-\sigma}}{(\sigma-r+k)w_k}  \\
 &\quad -|s|(\sigma-r+k)w_k\\
 &\qquad\cdot\int_{N}^{\infty}\frac{\xi-[\xi]}{(a+m_1w_1+\cdots+m_{k-1}w_{k-1}+w_{k}\xi+w_{k+1}N+\cdots+w_rN)^{\sigma-r+k+1}}d\xi\\
 &=\frac{|s|(a+m_1w_1+\cdots+m_{k-1}w_{k-1}+w_{k}N+\cdots+w_rN)^{r-k-\sigma}}{(\sigma-r+k)w_k}  \\
 &\quad +O(|s| (a+m_1w_1+\cdots+m_{k-1}w_{k-1}+w_{k}N+\cdots+w_rN)^{r-k-\sigma})\\
 &\ll 
 |s|(a+m_1w_1+\cdots+m_{k-1}w_{k-1}+w_{k}N+\cdots+w_rN)^{r-k-\sigma}.
\end{align*}
Then we have
\begin{align*}
 S_{k}
 &=\frac{(a+m_{1}w_{1}+\cdots +m_{k-1}w_{k-1}+w_kN+\cdots+w_{r}N)^{r-k+1-s}}{(s-1)\cdots (s-r+k-1)w_k\cdots w_{r}}\\
 &\quad -\frac{w_k(s-r+k-1)}{(s-1)\cdots (s-r+k)w_{k+1}\cdots w_{r}}\\
 &\qquad\cdot\int_{N}^{\infty}
  \frac{\xi-[\xi]}{(a+m_{1}w_{1}+\cdots +m_{k-1}w_{k-1}+w_{k}\xi+w_{k+1}N+\cdots+w_{r}N)^{s-r+k+1}} d\xi\\
 &\quad +O\left( |s|(a+m_1w_1+\cdots+m_{k-1}w_{k-1}+w_{k}N+\cdots+w_rN)^{r-k-\sigma}  \right) \\
 &=\frac{(a+m_{1}w_{1}+\cdots +m_{k-1}w_{k-1}+w_kN+\cdots+w_{r}N)^{r-k+1-s}}{(s-1)\cdots (s-r+k)w_k\cdots w_{r}}\\
 &\quad +O\left( |s|(a+m_1w_1+\cdots+m_{k-1}w_{k-1}+w_{k}N+\cdots+w_rN)^{r-k-\sigma}  \right)
\end{align*}
for $\sigma>r-k+1$.
Moreover, the above formula holds for $\sigma>r-k$.
Therefore, we have
\begin{align*}
 &\sum_{m_{1}>N}
   \cdots 
   \sum_{m_{r}>N}
   \frac{1}{(a+\mathbf{m}\cdot\mathbf{w})^s}\\
 &\quad =\frac{(a+w_{1}N+\cdots+w_{r}N)^{r-s}}{(s-1)\cdots (s-r)w_1\cdots w_{r}}
  + O(|s|(a+w_1N+\cdots+w_rN)^{r-1-\sigma})
\end{align*}
for $\sigma>r-1$, so the proof of Lemma \ref{lemC} is complete. 
\end{proof}

\begin{lemma}\label{lemB}
For $r\ge 1$, a complex number $s$, and $1\le x\le N$, the following holds:
\begin{align*}
 &\sum_{\substack{(p_{j},q_{j})\in\{(0,x),(x,N)\}\\j=1,\dots,r\\((p_1,q_1),\dots,(p_r,q_r))\ne((0,x), \dots,(0,x))}}
 \sum_{\substack{b_{j}\in\{p_{j},q_{j}\}\\j=1,\dots,r}}
  (-1)^{\#\{b_{j}=q_{j}\}}
  \left(a+b_{1}w_1+\cdots+b_{r}w_{r}\right)^{r-s}\\
  &=\sum_{\substack{E\subseteq\{w_1,\dots,w_r\}\\E\ne\emptyset}}
 (-1)^{\# E}
 \left\{
 \biggl(a+N\sum_{ e\in E }e\biggr)^{r-s}
 -\biggl(a+x\sum_{ e\in E }e\biggr)^{r-s}
 \right\}.
\end{align*}
\end{lemma}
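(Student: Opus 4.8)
The plan is to interpret both sides combinatorially and to exploit a per-index factorization of the sign. First I would restore the single excluded tuple $((0,x),\dots,(0,x))$ and write the left-hand side as (full sum) $-$ (restored term), where the full sum ranges over all $((p_i,q_i))_{i=1}^r$ with each $(p_i,q_i)\in\{(0,x),(x,N)\}$ together with all $b_i\in\{p_i,q_i\}$. Since the pairs and the choices $b_i$ are selected independently across $i$, I would view each index $i$ as independently picking one of four configurations: $(p_i,q_i)=(0,x)$ with $b_i=0$ (sign $+1$), $(p_i,q_i)=(0,x)$ with $b_i=x$ (sign $-1$), $(p_i,q_i)=(x,N)$ with $b_i=x$ (sign $+1$), or $(p_i,q_i)=(x,N)$ with $b_i=N$ (sign $-1$), the sign recording the factor $(-1)$ contributed to $(-1)^{\#\{q_1,\dots,q_r\}}$ exactly when $b_i=q_i$.

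The crucial point, which I expect to drive the whole argument, is that although the summand $(a+\sum_i b_iw_i)^{r-s}$ does not factor over $i$, I can group the full sum by the resulting value tuple $(b_1,\dots,b_r)\in\{0,x,N\}^r$. Because the total sign is the product of the per-index signs and the configurations are independent, the aggregate coefficient of a fixed tuple factors as $\prod_i c(b_i)$, where $c(0)=+1$, $c(x)=-1+1=0$, and $c(N)=-1$. Thus the two configurations producing $b_i=x$ carry opposite signs and cancel, so only tuples with every $b_i\in\{0,N\}$ survive. Setting $E=\{\,i:b_i=N\,\}$ and identifying it with a subset of $\{w_1,\dots,w_r\}$, the full sum collapses to $\sum_{E\subseteq\{w_1,\dots,w_r\}}(-1)^{\#E}\bigl(a+N\sum_{e\in E}e\bigr)^{r-s}$.

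Next I would evaluate the restored term the same way. There all pairs equal $(0,x)$, so each $b_i\in\{0,x\}$ with sign $+1$ for $b_i=0$ and $-1$ for $b_i=x$; the identical grouping yields $\sum_{E\subseteq\{w_1,\dots,w_r\}}(-1)^{\#E}\bigl(a+x\sum_{e\in E}e\bigr)^{r-s}$. Subtracting, the two $E=\emptyset$ contributions are both $a^{r-s}$ and cancel, leaving precisely $\sum_{E\ne\emptyset}(-1)^{\#E}\bigl\{(a+N\sum_{e\in E}e)^{r-s}-(a+x\sum_{e\in E}e)^{r-s}\bigr\}$, which is the claimed right-hand side.

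I expect the only delicate bookkeeping to be the sign convention $(-1)^{\#\{q_1,\dots,q_r\}}$, namely confirming that it counts exactly the indices with $b_i=q_i$ and hence factors as a product of per-index signs, together with the observation that $x$ serves as the upper endpoint $q_i$ in the pair $(0,x)$ but as the lower endpoint $p_i$ in the pair $(x,N)$. This endpoint coincidence is exactly what forces the $b_i=x$ terms to cancel, and everything else is routine once that cancellation is in place.
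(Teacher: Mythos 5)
Your proof is correct, and it is in fact more complete than the paper's own argument. The paper proves the lemma only for $r=2$ and $r=3$ by expanding all configurations and cancelling terms by hand, then asserts that ``the general case can be proved in the same way''; you instead give a uniform proof valid for every $r$. Your key move --- restoring the excluded tuple $((0,x),\dots,(0,x))$, grouping the full sum by the value tuple $(b_1,\dots,b_r)\in\{0,x,N\}^r$, and observing that the aggregate coefficient factors as $\prod_i c(b_i)$ with $c(0)=+1$, $c(x)=-1+1=0$, $c(N)=-1$ --- turns the cancellation that the paper verifies case-by-case into a structural one-line fact: the value $x$ arises once as an upper endpoint $q_i$ (sign $-1$) and once as a lower endpoint $p_i$ (sign $+1$), so every tuple containing an $x$ drops out, leaving $\sum_{E}(-1)^{\#E}\bigl(a+N\sum_{e\in E}e\bigr)^{r-s}$. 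Subtracting the restored term, which by the same factorization equals $\sum_{E}(-1)^{\#E}\bigl(a+x\sum_{e\in E}e\bigr)^{r-s}$, and cancelling the two $E=\emptyset$ contributions gives exactly the right-hand side. You also correctly isolate the one genuinely delicate point: the notation $(-1)^{\#\{q_1,\dots,q_r\}}$ must be read as $(-1)$ raised to the number of indices with $b_i=q_i$, which is what makes the sign multiplicative over $i$; this reading is confirmed by the paper's own $r=2$ expansion. What your route buys is a rigorous treatment of general $r$, closing the gap the paper leaves open with ``similarly''; what the paper's route buys is only explicitness in the small cases.
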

\begin{proof}
We shall prove the case where $r=2,3$. 
The general case can be proved in the same way.
When $r=2$, we have
\begin{align*}
 &\textrm{L.H.S.}\\
 &=\sum_{\substack{(p_{1},q_{1})\in\{(0,x),(x,N)\}\\(p_{2},q_{2})\in\{(0,x),(x,N)\}\\((p_{1},q_{1}),(p_{2},q_{2}))\ne((0,x),(0,x))}}
 \sum_{b_{1}\in\{p_{1},q_{1}\}}
 \sum_{b_{2}\in\{p_{2},q_{2}\}}
  (-1)^{\#\{b_{j}=q_{j}|j=1,2\}}
  (a+b_{1}w_1+b_{2}w_{2})^{2-s}\\
 &=\Biggl(
 \sum_{\substack{b_{1}\in\{p_{1}=x,q_{1}=N\}\\b_{2}\in\{p_{2}=0,q_{2}=x\}}}
 +\sum_{\substack{b_{1}\in\{p_{1}=0,q_{1}=x\}\\b_{2}\in\{p_{2}=x,q_{2}=N\}}}
 +\sum_{\substack{b_{1}\in\{p_{1}=x,q_{1}=N\}\\b_{2}\in\{p_{2}=x,q_{2}=N\}}}
 \Biggr)  
 (-1)^{\#\{b_{j}=q_{j}|j=1,2\}}
 (a+b_{1}w_1+b_{2}w_{2})^{2-s}\\
 &=(a+w_{1}x)^{2-s}
 -(a+w_{1}x+w_{2}x)^{2-s}
 -(a+w_1N)^{2-s}
 +(a+w_1N+w_{2}x)^{2-s}\\
 &\quad
 +(a+w_{2}x)^{2-s}
 -(a+w_{2}N)^{2-s}
 -(a+w_1x+w_{2}x)^{2-s}
 +(a+w_1x+w_{2}N)^{2-s}\\
 &\quad
 +(a+w_1x+w_{2}x)^{2-s}
 -(a+w_1x+w_{2}N)^{2-s}
 -(a+w_1N+w_{2}x)^{2-s}
 +(a+w_1N+w_{2}N)^{2-s}\\
 &=(a+w_{1}x)^{2-s}
 +(a+w_{2}x)^{2-s}
 -(a+w_{1}x+w_{2}x)^{2-s}\\
 &\quad
 -(a+w_1N)^{2-s} 
 -(a+w_{2}N)^{2-s}
 +(a+w_1N+w_{2}N)^{2-s}\\
 &=\textrm{R.H.S.}
\end{align*}

When $r=3$, we also have
\begin{align*}
 &\textrm{L.H.S.}\\
 &=\sum_{\substack{(p_{1},q_{1})\in\{(0,x),(x,N)\}\\(p_{2},q_{2})\in\{(0,x),(x,N)\}\\(p_{3},q_{3})\in\{(0,x),(x,N)\}\\((p_{1},q_{1}),(p_{2},q_{2}),(p_{3},q_{3}))\ne((0,x),(0,x),(0,x))}}
 \sum_{b_{1}\in\{p_{1},q_{1}\}}
 \sum_{b_{2}\in\{p_{2},q_{2}\}}
 \sum_{b_{3}\in\{p_{3},q_{3}\}}\\
 &\qquad\qquad
  (-1)^{\#\{b_{j}=q_{j}|j=1,2,3\}}
  (a+b_{1}w_1+b_{2}w_{2}+b_{3}w_3)^{3-s}\\
 &=\Biggl(
 \sum_{\substack{b_{1}\in\{p_{1}=x,q_{1}=N\}\\b_{2}\in\{p_{2}=0,q_{2}=x\}\\b_{3}\in\{p_{3}=0,q_{3}=x\}}}
 +\sum_{\substack{b_{1}\in\{p_{1}=0,q_{1}=x\}\\b_{2}\in\{p_{2}=x,q_{2}=N\}\\b_{3}\in\{p_{3}=0,q_{3}=x\}}}
 +\sum_{\substack{b_{1}\in\{p_{1}=0,q_{1}=x\}\\b_{2}\in\{p_{2}=0,q_{2}=x\}\\b_{3}\in\{p_{3}=x,q_{3}=N\}}}\\
 &\qquad
 +\sum_{\substack{b_{1}\in\{p_{1}=x,q_{1}=N\}\\b_{2}\in\{p_{2}=x,q_{2}=N\}\\b_{3}\in\{p_{3}=0,q_{3}=x\}}}
 +\sum_{\substack{b_{1}\in\{p_{1}=x,q_{1}=N\}\\b_{2}\in\{p_{2}=0,q_{2}=x\}\\b_{3}\in\{p_{3}=x,q_{3}=N\}}}
 +\sum_{\substack{b_{1}\in\{p_{1}=0,q_{1}=x\}\\b_{2}\in\{p_{2}=x,q_{2}=N\}\\b_{3}\in\{p_{3}=x,q_{3}=N\}}}
 +\sum_{\substack{b_{1}\in\{p_{1}=x,q_{1}=N\}\\b_{2}\in\{p_{2}=x,q_{2}=N\}\\b_{3}\in\{p_{3}=x,q_{3}=N\}}}
 \Biggr)\\ 
 &\qquad\qquad
  (-1)^{\#\{b_{j}=q_{j}|j=1,2,3\}}
  (a+b_{1}w_1+b_{2}w_{2}+b_{3}w_3)^{3-s}.
\end{align*}
By carefully observing the signs of each term and considering the terms that cancel out, we obtain the right-hand side.
Indeed, after grouping and canceling terms appropriately, we get
\begin{align*}
 &\textrm{L.H.S.}\\
 &=(a+w_{1}x)^{3-s}
 +(a+w_{2}x)^{3-s}
 +(a+w_{3}x)^{3-s}
 -(a+w_{1}x+w_{2}x)^{3-s}\\
 &\quad 
 -(a+w_{1}x+w_{3}x)^{3-s}
 -(a+w_{2}x+w_{3}x)^{3-s}
 +(a+w_{1}x+w_{2}x+w_{3}x)^{3-s}\\
  &\quad
 -(a+w_{1}N)^{3-s}
 -(a+w_{2}N)^{3-s}
 -(a+w_{3}N)^{3-s}
 +(a+w_{1}N+w_{2}N)^{3-s}\\
 &\quad 
 +(a+w_{1}N+w_{3}N)^{3-s}
 +(a+w_{2}N+w_{3}N)^{3-s}
 -(a+w_{1}N+w_{2}N+w_{3}N)^{3-s}\\
 &=\textrm{R.H.S.}
\end{align*}
as claimed.
\end{proof}

\begin{proof}[Proof of Theorem \ref{main2}]
For $\sigma>r$, by definition we decompose
\begin{align*}
 \zeta_r(s,a,\mathbf{w})
 &=\sum_{0\le m_1 \le N}
 \cdots 
 \sum_{0\le m_r \le N}
 \frac{1}{(a+\mathbf{m}\cdot\mathbf{w})^s}
 +\sum_{\substack{S\subsetneq\{1,\dots,r\}\\S\neq\emptyset}} 
 \sum_{\mathbf{m}\in Q_S}
 \frac{1}{(a+\mathbf{m}\cdot\mathbf{w})^s}\\
 &\quad +\sum_{m_1>N}
 \cdots
 \sum_{m_r>N}
 \frac{1}{(a+\mathbf{m}\cdot\mathbf{w})^s},
\end{align*}
where 
$
 Q_S
 \coloneqq\left\{ \mathbf{m} \mid m_l > N \text{ for } l \in S, \; m_j \le N \text{ for } j \notin S \right\}.
$
The second term of the above equation corresponds to all combinations in Lemma 2 where each 
$ (p_j,q_j)\ (j=1,\dots,r)$ is taken from either $(0,N)$ or $(N,\infty)$, 
except when all pairs are chosen from the same side.
Then we have
\begin{align}
\begin{split}
 &\sum_{\substack{S\subsetneq\{1,\dots,r\}\\S\neq\emptyset}} 
 \sum_{\mathbf{m}\in Q_S}
 \frac{1}{(a+\mathbf{m}\cdot\mathbf{w})^s}\\
 &=-\sum_{\substack{E\subsetneq\{w_1,\dots,w_r\}\\E\ne\emptyset}}
 (-1)^{\#E}
 \frac{(a+N\sum_{ e\in E }e)^{r-s}}{(s-1)\cdots (s-r) w_1\cdots w_{r}}
 +O(N^{r-1-\sigma}) 
\end{split}
\end{align}
by Lemma \ref{lemA}. Moreover, 
\begin{align*}
 \begin{split}
  \sum_{m_{1}>N}
   \cdots 
   \sum_{m_{r}>N}
   \frac{1}{(a+\mathbf{m}\cdot\mathbf{w})^s}
 =\frac{(a+w_{1}N+\cdots+w_{r}N)^{r-s}}{(s-1)\cdots (s-r)w_1\cdots w_{r} }
  + O(|s|N^{r-1-\sigma}).
 \end{split}
\end{align*}
by Lemma \ref{lemC}. Thus we have
\begin{align*}
 \zeta_r(s,a,\mathbf{w})
 &=
 \sum_{0\le m_1 \le N}
 \cdots 
 \sum_{0\le m_r \le N}
 \frac{1}{(a+\mathbf{m}\cdot\mathbf{w})^s}\\
 &\quad
 -\sum_{\substack{E\subseteq\{w_1,\dots,w_r\}\\E\ne\emptyset}}
 (-1)^{\#E}
 \frac{(a+N\sum_{ e\in E }e)^{r-s}}{(s-1)\cdots (s-r) w_1\cdots w_{r}}
 +O(|s|N^{r-1-\sigma})
\end{align*}
for $\sigma>r-1$. On the other hand, we decompose
\begin{align*}
 &\sum_{0\le m_1 \le N}
 \cdots 
 \sum_{0\le m_r \le N}
 \frac{1}{(a+\mathbf{m}\cdot\mathbf{w})^s}\\
 &=
  \sum_{0\le m_1 \le x}
  \cdots 
  \sum_{0\le m_r \le x}
  \frac{1}{(a+\mathbf{m}\cdot\mathbf{w})^s}
   +\sum_{\substack{S\subseteq\{1,\dots,r\}\\S\neq\emptyset}} 
   \sum_{\mathbf{m}\in R_S}
   \frac{1}{(a+\mathbf{m}\cdot\mathbf{w})^s},
\end{align*}
where 
$
 R_S
 \coloneqq\left\{ \mathbf{m} \mid x<m_l \le N\text{ for } l \in S, \; 0\le m_j \le x \text{ for } j \notin S \right\}
$.
By Lemmas \ref{lemA} and \ref{lemB}, we have
\begin{align*}
 &\sum_{0\le m_1 \le N}
 \cdots 
 \sum_{0\le m_r \le N}
 \frac{1}{(a+\mathbf{m}\cdot\mathbf{w})^s}\\
 &=\sum_{0\le m_1 \le x}
 \cdots 
 \sum_{0\le m_r \le x}
 \frac{1}{(a+\mathbf{m}\cdot\mathbf{w})^s}\\
 &\quad
 +\sum_{\substack{E\subseteq\{w_1,\dots,w_r\}\\E\ne\emptyset}}
 (-1)^{\# E}
 \frac{(a+N\sum_{ e\in E }e)^{r-s}-(a+x\sum_{ e\in E }e)^{r-s}}{(s-1)\cdots(s-r)w_1\cdots w_r}
 +O(x^{r-\sigma-1})+O(|s|N^{r-\sigma-1}). 
\end{align*}
Thus we find
\begin{align*}
 \zeta_r(s,a,\mathbf{w})
 &=\sum_{0\le m_1 \le x}
 \cdots 
 \sum_{0\le m_r \le x}
 \frac{1}{(a+\mathbf{m}\cdot\mathbf{w})^s}\\
 &\quad
  -\sum_{\substack{E\subseteq\{w_1,\dots,w_r\}\\E\ne\emptyset}}
 (-1)^{\# E}
 \frac{(a+x\sum_{ e\in E }e)^{r-s}}{(s-1)\cdots(s-r)w_1\cdots w_r}
 +O(x^{r-\sigma-1})+O(|s|N^{r-\sigma-1}).
\end{align*}
Taking the limit as $N\to\infty$, we obtain the desired conclusion.
\end{proof}

\section{Proof of Theorem \ref{main3}} 
\begin{proof}[Proof of Theorem \ref{main3}]
Setting $ C =2\pi$ and $x=t$ in Theorem \ref{main2}, we have
\begin{align}\label{sigma+O}
 \zeta_r(s,a,\mathbf{w})
 &=\sum_{0\le m_1\le t}
  \cdots 
  \sum_{0\le m_r\le t}
  \frac{1}{(a+\mathbf{m}\cdot \mathbf{w})^{s}}
  +O(t^{r-1-\sigma}).
\end{align}
Let the first term on the right-hand side be $\Sigma(s)$. 
Then we derive
\begin{align*}
 \int_1^T |\Sigma(s)|^2 dt
 =\int_1^T 
  \Biggl(
  \sum_{m_1 \le t,\dots, m_r \le t} 
  \frac{1}{(a+\mathbf{m}\cdot \mathbf{w})^{\sigma+it}}
  \sum_{n_{1} \le t,\dots, n_{r} \le t} 
  \frac1{(a+\mathbf{n}\cdot \mathbf{w})^{\sigma-it}} 
  \Biggr)
  \,dt.
\end{align*}
Now we change the order of summation and integration. 
From the conditions $m_k\le t,n_k\le t$ for $k=1,\dots,r$, we deduce that the range of $t$ is given by
$M\coloneqq\max_{1\le k\le r}\{m_k,n_k\}\le t\le T$. 
Using this, we obtain
\begin{align*}
 \int_1^T |\Sigma(s)|^2 dt
 &=\sum_{m_1 \le T,\dots, m_r \le T} 
   \frac{1}{(a+\mathbf{m}\cdot \mathbf{w})^\sigma}
  \sum_{n_{1} \le T,\dots, n_{r} \le T} 
   \frac{1}{(a+\mathbf{n}\cdot \mathbf{w})^\sigma}
  \int_M^T \left( \frac{a+\mathbf{m}\cdot \mathbf{w}}{a+\mathbf{n}\cdot \mathbf{w}} \right)^{it} dt\\
 &= \sum_{\substack{0\le m_1,\dots m_r \le T \\ 0\le n_1,\dots n_r \le T \\ \mathbf{m}\cdot \mathbf{w}=\mathbf{n}\cdot \mathbf{w} } } \frac{1}{(a+\mathbf{m}\cdot \mathbf{w})^\sigma (a+\mathbf{n}\cdot \mathbf{w})^\sigma}
 \cdot (T -M) \\
 &\quad+\sum_{\substack{0\le m_1,\dots m_r \le T \\ 0\le n_1,\dots n_r \le T \\ \mathbf{m}\cdot \mathbf{w} \neq \mathbf{n}\cdot \mathbf{w} } } \frac{1}{(a+\mathbf{m}\cdot \mathbf{w})^\sigma (a+\mathbf{n}\cdot \mathbf{w})^\sigma}   \\
 &\qquad \cdot \frac{e^{iT \log \{(a+\mathbf{m}\cdot \mathbf{w})/(a+\mathbf{n}\cdot \mathbf{w}) \}}
 -e^{iM \log \{(a+\mathbf{m}\cdot \mathbf{w})/(a+\mathbf{n}\cdot \mathbf{w}) \}}
 }
 {i \log \{(a+\mathbf{m}\cdot \mathbf{w})/(a+\mathbf{n}\cdot \mathbf{w}) \}}.
\end{align*}

Let $S_1$ be the first term on the right-hand side of the above equality and $S_2$ be the second term, respectively. 
We first evaluate $S_1$ and obtain the following:
\begin{align}
 S_1
 =T
 (\tilde{\zeta_{r}}(\sigma,a,\mathbf{w})-U)
 -
 \sum_{\substack{
  0\le m_1,\dots,m_r\le T\\
  0\le n_{1},\dots,n_{r}\le T\\
  \mathbf{m}\cdot\mathbf{w}=\mathbf{n}\cdot\mathbf{w}
  }}
  \frac{M}{(a+\mathbf{m}\cdot\mathbf{w})^{\sigma}(a+\mathbf{n}\cdot\mathbf{w})^{\sigma}},
 \label{S_1}
\end{align}
where
\[
 U
 \coloneqq
 \sum_{\substack{
  m_1,\dots,m_r\ge0\\
  n_{1},\dots,n_{r}\ge0\\
  \max_{1\le k\le r}\{m_k,n_k\} > T\\  \mathbf{m}\cdot\mathbf{w}=\mathbf{n}\cdot\mathbf{w}
  }}
  \frac{1}{
   (a+\mathbf{m}\cdot \mathbf{w})^{\sigma}
   (a+\mathbf{n}\cdot \mathbf{w})^{\sigma}
  }.
\]
%
%
We decompose $U=U_1+U_2$, where
\[
U_1:=
\sum_{\substack{
m_1,\dots,m_r\ge0\\
n_1,\dots,n_r\ge0\\
\max_{1\le k\le r}\{m_k,n_k\}>T\\
\min_{1\le k\le r}\{m_k,n_k\}\le T\\
\mathbf m\cdot\mathbf w=\mathbf n\cdot\mathbf w
}}
\frac{1}{(a+\mathbf m\cdot\mathbf w)^\sigma (a+\mathbf n\cdot\mathbf w)^\sigma},
\]
and
\[
U_2:=
\sum_{\substack{
m_1,\dots,m_r>T\\
n_1,\dots,n_r>T\\
\mathbf m\cdot\mathbf w=\mathbf n\cdot\mathbf w
}}
\frac{1}{(a+\mathbf m\cdot\mathbf w)^\sigma (a+\mathbf n\cdot\mathbf w)^\sigma}.
\]
We first estimate $U_1$.
Since $a+\mathbf{m}\cdot \mathbf{w}\asymp 1+m_1+\cdots+m_r$, it follows that
\[
 U_1
 \ll
 \sum_{\substack{
  m_1,\dots,m_r\ge0\\
  n_{1},\dots,n_{r}\ge0\\
  \max_{1\le k\le r}\{m_k,n_k\} > T\\
  \min_{1\le k\le r}\{m_k,n_k\}\le T\\
  \mathbf{m}\cdot\mathbf{w}=\mathbf{n}\cdot\mathbf{w}
  }}
  \frac{1}{(1+m_1+\cdots+m_r)^{\sigma}(1+n_1+\cdots+n_r)^{\sigma}}.
\]
Thus, we find that
\begin{align*}
 U_1
 &\ll
  \sum_{\substack{
  0\le d_1,d_2\le r\\
  \max\{d_1,d_2\}>0\\
  \min\{d_1,d_2\}<r
  }}
  \sum_{\substack{
  m_1,\dots,m_r\ge0\\
  n_{1},\dots,n_{r}\ge0\\
  \#\{m_1,\dots,m_r> T\}=d_1\\
  \#\{n_1,\dots,n_r> T\}=d_2\\
  \mathbf{m}\cdot\mathbf{w}=\mathbf{n}\cdot\mathbf{w}
  }}
  \frac{1}{(1+m_1+\cdots+m_r)^{\sigma}(1+n_1+\cdots+n_r)^{\sigma}}.
\end{align*}
If we fix $\mathbf{m}\cdot\mathbf{w}$ on the second sum and consider the other conditions, the number of running indices is $2r-2$ i.e., $m_1,\dots,m_{r-1}$ and $n_1,\dots,n_{r-1}$ for the following two reasons. 
First, when the value of $\mathbf{m}\cdot\mathbf{w}$ is given, $m_r$ is uniquely determined if $m_1, \dots, m_{r-1}$ and $\mathbf{w}$ are fixed. 
Second, when $\mathbf{m}\cdot\mathbf{w}=\mathbf{n}\cdot\mathbf{w}$ holds, $n_r$ is uniquely determined if $\mathbf{m}$, $n_1, \dots, n_{r-1}$, and $\mathbf{w}$ are fixed.
Since $m_1+\dots+m_{r-1}\asymp n_1+\dots+n_{r-1}$ when $\mathbf{m}\cdot\mathbf{w}=\mathbf{n}\cdot\mathbf{w}$, we have
\begin{align*}
 U_1
 &\ll 
  \sum_{\substack{
  0\le d_1,d_2\le r\\
  \max\{d_1,d_2\}>0\\
  \min\{d_1,d_2\}<r
  }}
  \sum_{m_1+\dots+m_{r-1}\asymp n_1+\dots+n_{r-1}}\\
  &\quad
  \sum_{m_1>T}\;\cdots\sum_{m_{d_1}>T}\;
  \sum_{0\le m_{d_1+1}\le T}\;\cdots\sum_{0\le m_{r-1}\le T}\;
  \sum_{n_1>T}\;\cdots\sum_{n_{d_2}>T}\;
  \sum_{0\le n_{d_2+1}\le T}\;\cdots\sum_{0\le n_{r-1}\le T}\;\\
  &\quad 
  \frac{1}{(1+m_1+\cdots+m_{r-1})^{\sigma}(1+n_1+\cdots+n_{r-1})^{\sigma}}\\
 &\ll 
  \sum_{\substack{
  0\le d_1,d_2\le r\\
  \max\{d_1,d_2\}>0\\
  \min\{d_1,d_2\}<r
  }}
  \sum_{m_1+\dots+m_{d_1}\asymp n_1+\dots+n_{d_2}}\\
  &\quad
  \sum_{m_1>T}\;\cdots\sum_{m_{d_1}>T}\;
  \sum_{n_1>T}\;\cdots\sum_{n_{d_2}>T}
  \frac{ T^{2r-2-(d_1+d_2)} }{(1+m_1+\cdots+m_{d_1})^{\sigma}(1+n_1+\cdots+n_{d_2})^{\sigma}}\\
 &\ll 
  \sum_{\substack{
  0\le d_1,d_2\le r\\
  \max\{d_1,d_2\}>0\\
  \min\{d_1,d_2\}<r
  }}
  T^{2r-2-(d_1+d_2)}
  \sum_{m_1+\dots+m_{d_1}\asymp n_1+\dots+n_{d_2}}\\
  &\quad
  \cdot\left(
  \sum_{m_1>T}\;\cdots\sum_{m_{d_1}>T}\;
  \frac{ 1 }{(1+m_1+\cdots+m_{d_1})^{\sigma}}
  \right)
  \cdot\left(
  \sum_{n_1>T}\;\cdots\sum_{n_{d_2}>T}
  \frac{ 1 }{(1+n_1+\cdots+n_{d_2})^{\sigma}}
  \right).
\end{align*}
Here, 
\begin{align*}
 \sum_{m_1>T}\;\cdots\sum_{m_{d_1}>T}\;
 \frac{1}{(1+m_1+\cdots+m_{d_1})^{\sigma}}
 &\ll 
 \sum_{j> d_1 T} \frac{1}{(1+j)^{\sigma}}
 \cdot \binom{j}{d_1-1}\\
 &\ll 
 \sum_{j> d_1 T} j^{-\sigma+d_1-1}\\ 
 &\ll 
 T^{-\sigma+d_1}
\end{align*}
if $d_1\ge 1$, 
\begin{align*}
 \sum_{n_1>T}\;\cdots\sum_{n_{d_2}>T}\;
 \frac{1}{(1+n_1+\cdots+n_{d_2})^{\sigma}}
 &\ll 
 T^{-\sigma+d_2}.
\end{align*}
%
By considering the sum $\sum_{m_1+\dots+m_{d_1}\asymp n_1+\dots+n_{d_2}}$, we have
\begin{align*}
 U_1
 \ll 
 T^{2r-2\sigma-1}.
\end{align*}
For $U_2$, all the variables $m_1,\dots,m_r,n_1,\dots,n_r$ exceed $T$, and arguing in the same way as above with $d_1=d_2=r$, we obtain
$
U_2\ll T^{2r-2\sigma-1} $.
Hence
\[
U\ll T^{2r-2\sigma-1}.
\]
The sum involving $M$ in \eqref{S_1} is estimated, since $M\ll m_1+\cdots+m_r+n_1+\cdots+n_r $ in this case, as
\begin{align*}
 &\sum_{\substack{
 0\le m_1,\dots,m_r\le T\\
 0\le n_{1},\dots,n_{r}\le T\\
 \mathbf{m}\cdot\mathbf{w}=\mathbf{n}\cdot\mathbf{w}
 }}
 \frac{M}{(a+\mathbf{m}\cdot\mathbf{w})^{\sigma}(a+\mathbf{n}\cdot\mathbf{w})^{\sigma}} \\
 &\ll
\sum_{\substack{
 0\le m_1,\dots,m_r\le T\\
 0\le n_{1},\dots,n_{r}\le T\\
 m_1+\cdots+m_r \asymp n_1+\cdots+ n_r
 }}
 \frac{ m_1+\cdots+m_r+n_1+\cdots+n_r}{(1+m_1+\cdots+m_r)^{\sigma}(1+ n_1+\cdots+ n_r)^{\sigma}} \\
 &\ll
 \sum_{0\le j\le 2rT}
 \sum_{\substack{
 0\le m_1,\dots,m_{r-1}\le T\\
 0\le n_{1},\dots,n_{r-1}\le T\\
 }}
 \frac{1}{(1+j)^{2\sigma-1}} \\
 &\ll
 \sum_{0\le j\le 2rT}
 \frac{j^{2r-2}}{(1+j)^{2\sigma-1}} \\
 &\ll
 \sum_{0\le j\le 2rT}
 \frac{1}{(1+j)^{2\sigma-2r +1}}    \\
 &\ll
  \begin{cases}
 T^{2r-2\sigma}
 &\text{if } r-1\le\sigma<r,\\
 \log{T}
 &\text{if }\sigma=r.
 \end{cases}
\end{align*}
Then we have
\begin{align*}
 S_1
 =
 \tilde{\zeta_{r}}(\sigma,a,\mathbf{w})T
+
  \begin{cases}
 O(T^{2r-2\sigma})
 &\text{if } r-1\le\sigma<r,\\
 O(\log{T})
 &\text{if }\sigma=r.
 \end{cases}
\end{align*}
Regarding $S_2$, we find
\begin{align*}
 S_2
 &\ll
  \sum_{\substack{
   0\le m_1,\dots,m_r\le T\\
   0\le n_{1},\dots,n_{r}\le T\\
   \mathbf{m}\cdot\mathbf{w}<\mathbf{n}\cdot\mathbf{w}
   }}
   \frac{1}{(a+\mathbf{m}\cdot\mathbf{w})^{\sigma}(a+\mathbf{n}\cdot\mathbf{w})^{\sigma}}
   \cdot
   \frac{1}{\log\{(a+\mathbf{n}\cdot\mathbf{w})/(a+\mathbf{m}\cdot\mathbf{w})\}}\\
 &=
  \left(
   \sum_{\substack{
   0\le m_1,\dots,m_r\le T\\
   0\le n_{1},\dots,n_{r}\le T\\
   a+\mathbf{m}\cdot\mathbf{w}
   <a+\mathbf{n}\cdot\mathbf{w}
   <2(a+\mathbf{m}\cdot\mathbf{w})
   }}
   +\sum_{\substack{
   0\le m_1,\dots,m_r\le T\\
   0\le n_{1},\dots,n_{r}\le T\\
   2(a+\mathbf{m}\cdot\mathbf{w})\le a+\mathbf{n}\cdot\mathbf{w}
   }}
  \right)\\
   &\qquad\qquad \frac{1}{(a+\mathbf{m}\cdot\mathbf{w})^{\sigma}(a+\mathbf{n}\cdot\mathbf{w})^{\sigma}}
   \cdot
   \frac{1}{\log\{(a+\mathbf{n}\cdot\mathbf{w})/(a+\mathbf{m}\cdot\mathbf{w})\}}.
\end{align*}
Let $W_1$ be the first term on the right-hand side of the above equality and $W_2$ be the second term. 
We have
\begin{align*}
 W_2
 &\ll
  \sum_{\substack{
   0\le m_1,\dots,m_r\le T\\
   0\le n_{1},\dots,n_{r}\le T\\
   2(a+\mathbf{m}\cdot\mathbf{w})\le a+\mathbf{n}\cdot\mathbf{w}
   }}
   \frac{1}{(a+\mathbf{m}\cdot\mathbf{w})^{\sigma}(a+\mathbf{n}\cdot\mathbf{w})^{\sigma}}\\
 &\ll 
  \left(
   \sum_{0\le m_1,\dots,m_r\le T}
   \frac{1}{(1+m_1+\dots+m_r)^{\sigma}}
  \right)^2.
\end{align*}
Let $n=m_1+\dots+m_r$. 
The number of solutions to $m_1+\dots+m_r=n$ with $0\le m_i\le T$ is bounded by
\[
 \#\{(m_1,\dots,m_r)\mid m_1+\dots+m_r=n\} \ll n^{r-1}.
\]
This comes from the combinatorial fact that the number of solutions to a sum of $r$ nonnegative integers is bounded by $\binom{n+r-1}{r-1}$, which is $n^{r-1}$.
Then the sum becomes
\[
 \sum_{0\le m_1, m_2,\dots, m_r\le T} 
 \frac{1}{(1+m_1+\dots+m_r)^\sigma} 
 \ll \sum_{n=0}^{rT} \frac{n^{r-1}}{(1+n)^\sigma}
 \ll \int_{1}^{rT} x^{r-1-\sigma} \,dx.
\]
%
Thus we obtain
\[
 W_2 
 \ll
 \begin{cases}
 T^{2r-2\sigma}
 &\text{if }r-1<\sigma<r,\\
 (\log T)^2
 &\text{if }\sigma=r.
 \end{cases}
\]
We also have the following evaluation of $W_1$:
\begin{align*}
 W_1
 &\ll
   \sum_{\substack{
   0\le m_1,\dots,m_r\le T\\
   0\le n_{1},\dots,n_{r}\le T\\
   a+\mathbf{m}\cdot\mathbf{w}
   <a+\mathbf{n}\cdot\mathbf{w}
   <2(a+\mathbf{m}\cdot\mathbf{w})
   }}
   \frac{1}{(a+\mathbf{m}\cdot\mathbf{w})^{\sigma}(a+\mathbf{n}\cdot\mathbf{w})^{\sigma}}
   \cdot
   \frac{1}{\log\{(a+\mathbf{n}\cdot\mathbf{w})/(a+\mathbf{m}\cdot\mathbf{w})\}}.
\end{align*}
We fix $m_1,\dots,m_r$ and $n_1,\dots,n_{r-1}$.
Then the condition
\[
a+\mathbf{m}\cdot\mathbf{w}
<
a+\mathbf{n}\cdot\mathbf{w}
<
2(a+\mathbf{m}\cdot\mathbf{w})
\]
is equivalent to
\[
\frac{1}{w_r}\sum_{j=1}^{r-1}(m_j-n_j)w_j+m_r
<
n_r
<
\frac{a}{w_r}
+\frac{1}{w_r}\sum_{j=1}^{r-1}(2m_j-n_j)w_j
+2m_r .
\]
Let $\varepsilon=\varepsilon(m_1,\dots,m_r,n_1,\dots,n_{r-1})$ and
$\delta=\delta(a,m_1,\dots,m_r,n_1,\dots,n_{r-1})$ be such that
$0\le \varepsilon,\delta<1$ and
\begin{align*}
 \frac{1}{w_r}\sum_{j=1}^{r-1}(m_j-n_j)w_j+m_r+\varepsilon &\in \mathbb Z,\\
 \frac{a}{w_r}
 +\frac{1}{w_r}\sum_{j=1}^{r-1}(2m_j-n_j)w_j
 +2m_r-\delta &\in \mathbb Z.
\end{align*}
Then
\[
K:=
\frac{a}{w_r}
+\frac{1}{w_r}\sum_{j=1}^{r-1}m_jw_j
+m_r-\varepsilon-\delta
\]
is a nonnegative integer, and every admissible $n_r$ can be written as
\[
n_r=
\frac{1}{w_r}\sum_{j=1}^{r-1}(m_j-n_j)w_j+m_r+\varepsilon+k
\]
for some $k=0,1,\dots,K$.
Hence
\[
a+\mathbf{n}\cdot\mathbf{w}
=
a+\mathbf{m}\cdot\mathbf{w}+w_r(\varepsilon+k).
\]
Since the case $\varepsilon=k=0$ would imply
$a+\mathbf{n}\cdot\mathbf{w}=a+\mathbf{m}\cdot\mathbf{w}$, which is excluded in $W_1$,
we may restrict the inner sum to those $k$ with $\varepsilon+k>0$.
Moreover, because
\[
0<\frac{w_r(\varepsilon+k)}{a+\mathbf{m}\cdot\mathbf{w}}<1
\]
in the range of $W_1$, we have
\[
\log\left(1+\frac{w_r(\varepsilon+k)}{a+\mathbf{m}\cdot\mathbf{w}}\right)
\asymp
\frac{w_r(\varepsilon+k)}{a+\mathbf{m}\cdot\mathbf{w}}.
\]
Since $d=1$, there exist $\lambda>0$ and positive integers
$p_1,\dots,p_r$ such that $w_j=\lambda p_j$ $(1\le j\le r)$.
Hence
\[
 \frac{1}{w_r}\sum_{j=1}^{r-1}(m_j-n_j)w_j+m_r
 =
 \frac{1}{p_r}\sum_{j=1}^{r-1}(m_j-n_j)p_j+m_r,
\]
so $\varepsilon$ is either $0$ or satisfies $\varepsilon\ge 1/p_r$.
Moreover, since $\mathbf m\cdot\mathbf w\ne \mathbf n\cdot\mathbf w$, we have
$k+\varepsilon>0$ and 
\[
 \sum_{\substack{0\le k\le K\\ k+\varepsilon>0}} \frac{1}{k+\varepsilon}
 \ll \log(2+K).
\]
Therefore,
\begin{align*}
 W_1
 &\ll
 \sum_{0\le m_1,\dots,m_r\le T}
 \sum_{\substack{0\le n_1,\dots,n_{r-1}\le T\\
 a+\sum_{j=1}^{r-1}n_jw_j<2(a+\mathbf{m}\cdot\mathbf{w})}}
 \sum_{\substack{0\le k\le K\\ \varepsilon+k>0}}
 \frac{1}{(a+\mathbf{m}\cdot\mathbf{w})^{\sigma}
 (a+\mathbf{m}\cdot\mathbf{w}+w_r(\varepsilon+k))^{\sigma}} \\
 &\qquad\qquad\qquad\qquad\qquad \cdot
 \frac{a+\mathbf{m}\cdot\mathbf{w}}{w_r(\varepsilon+k)}.
\end{align*}
Now, since all $w_j$ are fixed positive real numbers and
\[
a+\sum_{j=1}^{r-1}n_jw_j<2(a+\mathbf{m}\cdot\mathbf{w}),
\]
each $n_j$ $(1\le j\le r-1)$ is bounded by $O(a+\mathbf{m}\cdot\mathbf{w})$.
Also,
\[
K\asymp a+\mathbf{m}\cdot\mathbf{w}\asymp 1+m_1+\cdots+m_r.
\]
The number of possible $(n_1,\dots,n_{r-1})$ is
\[
\ll (a+\mathbf{m}\cdot\mathbf{w})^{r-1},
\]
and since $d=1$, there exist $\lambda>0$ and positive integers
$p_1,\dots,p_r$ such that $w_j=\lambda p_j$ $(1\le j\le r)$.
Hence
\[
 \frac{1}{w_r}\sum_{j=1}^{r-1}(m_j-n_j)w_j+m_r
 =
 \frac{1}{p_r}\sum_{j=1}^{r-1}(m_j-n_j)p_j+m_r,
\]
so $\varepsilon$ is either $0$ or satisfies $\varepsilon\ge 1/p_r$.
Moreover, since $\mathbf m\cdot\mathbf w\ne \mathbf n\cdot\mathbf w$, we have
$k+\varepsilon>0$. Therefore,
\[
\sum_{\substack{0\le k\le K\\ \varepsilon+k>0}} \frac{1}{\varepsilon+k}
\ll \log(K+2).
\]
Using also
\[
a+\mathbf{m}\cdot\mathbf{w}+w_r(\varepsilon+k)\asymp a+\mathbf{m}\cdot\mathbf{w},
\]
we obtain
\begin{align*}
 W_1
 &\ll
 \sum_{0\le m_1,\dots,m_r\le T}
 \frac{(a+\mathbf{m}\cdot\mathbf{w})^{r-1}\log(a+\mathbf{m}\cdot\mathbf{w}+2)}
 {(a+\mathbf{m}\cdot\mathbf{w})^{2\sigma-1}} \\
 &\ll
 \sum_{0\le m_1,\dots,m_r\le T}
 \frac{\log(2+m_1+\cdots+m_r)}
 {(1+m_1+\cdots+m_r)^{2\sigma-r}}.
\end{align*}
Let $N=m_1+\cdots+m_r$. Since
\[
\#\{(m_1,\dots,m_r)\in\mathbb Z_{\ge0}^r \mid \ m_1+\cdots+m_r=N\}
=
\binom{N+r-1}{r-1}
\ll N^{r-1},
\]
we find
\begin{align*}
 W_1
 &
 \ll
 \sum_{0 \le N \ll T} 
 \frac{\log(1+N)}{(1+N)^{2\sigma-r}} \binom{N+r-1}{r-1}
 \ll
 \sum_{0 \le N \ll T}
 (1+N)^{2r-2\sigma-1} \log(1+N) \\
%
 &
 \ll
 \begin{cases}
 T^{2r-2\sigma} \log T
 &\text{if } r-1<\sigma< r, \\
 (\log T)^2 
 &\text{if } \sigma=r.
\end{cases}
\end{align*}
And then, by the evaluations of $W_1$ and $W_2$, $S_2$ evaluates as
\begin{align*}
 S_2 \ll W_1+W_2
 \ll
 \begin{cases}
  T^{2r-2\sigma} \log T 
 &\text{if } r-1<\sigma< r, \\
  (\log T)^2
 &\text{if } \sigma=r.
 \end{cases}
\end{align*}
Thus, using the evaluations of $S_1$ and $S_2$, we obtain
\begin{align}
 \int_1^T|\Sigma(s)|^2 dt
=\tilde{\zeta_{r}}(\sigma,a,\mathbf{w})T
+ \begin{cases}
 O(T^{2r-2\sigma} \log T) 
 &\text{if } r-1<\sigma< r, \\
  O((\log T)^2) 
 &\text{if } \sigma=r.
 \end{cases}  \label{int_Sigma^2}
\end{align}

Furthermore, we obtain from \eqref{sigma+O} that
\begin{align*}
 &\int_1^T |\zeta_r(\sigma+it,a,\mathbf{w})|^2 \,dt\\
 &=\int_1^T |\Sigma(s)+O(t^{r-1-\sigma})|^2 \,dt\\
 &=\int_1^T |\Sigma(s)|^2 \,dt 
  +O\left(\int_1^T |\Sigma(s)| t^{r-1-\sigma} dt\right)+O \left(\int_1^T t^{2r-2-2\sigma} dt \right).
\end{align*}
We see that the third term on the right-hand side is estimated as
\begin{equation}
 \int_1^T t^{2r-2-2\sigma} dt
 \ll
 \begin{cases}
 T^{2r-2\sigma-1} 
 &\text{if } r-1<\sigma< r-1/2, \\
 \log T 
 &\text{if } \sigma=r-1/2, \\
 1
 &\text{if } r-1/2<\sigma\le r.
\end{cases} \label{int_t^2sigma-r-2}
\end{equation}
Also, using the Cauchy-Schwarz inequality for the second terms on the right-hand side, we see that
\begin{align}
 \int_1^T |\Sigma(s)|t^{r-1-\sigma} dt 
 &\le \left(\int_1^T |\Sigma(s)|^2 dt\right)^{1/2} \left(\int_1^T t^{2r-2-2\sigma} dt\right)^{1/2} \nonumber \\
 & \ll
 \begin{cases}
  T^{2r-2\sigma-1/2}(\log{T})^{1/2} 
 &\text{if } r-1<\sigma< r-1/2, \\
  T^{1/2}\log{T} 
 &\text{if } \sigma=r-1/2, \\
  T^{1/2}
 &\text{if } r-1/2<\sigma\le r.
\end{cases}\label{C-S_ineq}
\end{align}
Therefore, since \eqref{int_Sigma^2},\eqref{int_t^2sigma-r-2} and \eqref{C-S_ineq}, we have
\begin{align*}
 \int_1^T |\zeta_r(\sigma+it,a,\mathbf{w})|^2 \,dt 
 =\tilde{\zeta
 _{r}}(\sigma,a,\mathbf{w})T  
 +
 \begin{cases}
 O(T^{2r-2\sigma} \log T) 
 &\text{if } r-1/2<\sigma \le r-1/4, \\
  O(T^{1/2}) 
 &\text{if } r-1/4<\sigma \le r,
 \end{cases}
\end{align*}
and
\begin{align*}
 \int_1^T |\zeta_r(\sigma+it,a,\mathbf{w})|^2 \,dt 
 \ll T^{2r-2\sigma} \log T \quad \text{if } r-1<\sigma\le r-1/2.
\end{align*}
This completes the proof of Theorem \ref{main3}.
\end{proof}

\section*{Acknowledgments}
The authors thank the anonymous referee for helpful comments, which helped improve the accuracy of our proofs.
This work was supported by JSPS KAKENHI Grant Number JP22K13897.

\bibliographystyle{amsalpha}
\bibliography{References}
\end{document}